\theoremstyle{plain}
\newtheorem{theorem}{Theorem}[section]
\newtheorem{lemma}{Lemma}[section]
\newtheorem{corollary}{Corollary}[section]
\newtheorem{proposition}{Proposition}[section]
\newtheorem*{claim*}{Claim}
\newtheorem*{lemma*}{Lemma}
\newtheorem*{theorem*}{Theorem}
\theoremstyle{definition}
\theoremstyle{remark}
\newtheorem{remark}{Remark}
\begin{document}
	
	\title{Two applications of the Schwarz lemma}
	
	\author{Bingyuan Liu}

	\date{\today}

	\maketitle
	
	\begin{abstract}
		The Schwarz lemmas are well-known characterizations for holomorphic maps and we exhibit two examples of their applications. For a sequence family of biholomorphisms $f_j$, it is useful to determine the location of $f_j(q)$ for a fixed point $q$ in source manifolds (see Proposition \ref{2.5}). With it, we extend the Fornaess-Stout's theorem of \citep{FS77} in monotone unions of balls to ellipsoids in Section \ref{sec2}. In Section \ref{sec3}, we discuss the curvature bounds of complete K\"{a}hler metric on $\rtimes$ domains defined in \citep{Liu004} with an idea of \citep{Ya76}.
	\end{abstract}

	\section{Introduction}
	In \citep{FS77}, it was shown that if a m-dimensional complex (Kobayashi) hyperbolic manifold $M$ admits a monotone union of m-dimensional balls via $f_j$,  then the manifold $M$ is biholomorphic onto the unit ball $\mathbb{B}^m$.  Here, by $M$ admitting a monotone union of balls via $f_j$, we mean that 
	\begin{enumerate}
		\item there exists a sequence of open subsets $M_j\subset M$ so that $M_j\Subset M_{j+1}$,
		\item each $M_j$ is biholomorphic, by $f_j$, to the m-dimensional unit ball $\mathbb{B}^m$, and
		\item $M=\cup M_j$.
	\end{enumerate}
	In Section \ref{sec2}, we will follow this fashion and exhibit a theorem about monotone unions of ellipsoids $E_n:=\lbrace (z,w)\in\mathbb{C}^2: |z|^2+|w|^{2n}<1\rbrace$. Please note this topic is different with the work about automorphism groups and for the latter, readers are referred to \citep{BP91}, \citep{BP98}, \citep{GK91}, \citep{GK93}, \citep{Ro79} and \citep{Wo77}.
	
	Kobayashi metrics have a lot of applications and interesting overlaps with Teichm\"{u}ller metrics. However, it is not very easy to be calculated. Especially, after different geometric flows were introduced, the Kobayashi metric is difficult to be manipulated by differential equations. So in this article, we will assume $M$ to be a complex manifold with the holomorphic sectional curvature bounded from above by a negative number. The readers should be warned that this condition is slightly stronger than (Kobayashi) hyperbolicity due to a theorem of Greene-Wu in \citep{GW79}. 
	
	Let $M$ be a hermitian complex manifold with a holomorphic sectional curvature bounded from above by a negative number and $\Omega$ be any complete K\"{a}hler manifold with the Ricci curvature bounded from below. Then all holomorphic maps $F$ from $\Omega$ into $M$ follow a rule, namely, the Schwarz lemma. For the detail and background, we will introduce them in the Section \ref{sec1}.  The reader should note provided a complete Bergman metric on $M$ and replacing ellipsoids with strongly pseudoconvex domains in the Theorem \ref{thm1}, one can show $M$ is biholomorphic to a ball easily by an argument like \citep{Kl78} but we will not discuss it here (see the Remark \ref{more}).  The results in Section \ref{sec2} are expected to hold for (Kobayashi) hyperbolic $M$, but, again, we will not discuss it here because it will be not about Schwarz lemma.
	
	The other application in this note is about the curvature bounds. After the Schwarz lemma of \citep{Ya78}, Yang was able to show, by Yau's Schwarz lemma, that there does not exit a complete K\"{a}hler metric on polydiscs admitting holomorphic bisectional curvature bounded above and below. This argument was used to show product manifolds in \citep{SZ08} and the argument was also polished by \citep{Se12}. For almost-Hermitian manifold, please refer to \citep{To07} and \citep{FTY14}. In Section \ref{sec3}, we show a type of domains which are not biholomorphic, in general, to product manifolds but have the similar properties as the one Yang discovered. 
	
	\section{Preliminary and fundamental facts}\label{sec1}
	
	Royden's Schwarz lemma generalized the classical Schwarz lemma and Ahlfors' Schwarz lemma. In this note, we will use this version of Schwarz lemma. 
	
	\begin{theorem}[the Schwarz lemma of \citep{Ro80}]
		Let $f: M\rightarrow N$ be a holomorphic mapping from a complete K\"{a}hler manifold $(M, g)$ with its Ricci curvature bounded from below by a negative constant $-k$ into a Hermitian manifold $(N, h)$ with its holomorphic sectional curvature bounded from above by a negative constatnt $-K$. If $\nu$ is the maximal rank of the map $f$, then \begin{equation*}
		f^*h\leq\frac{2k\nu}{K(\nu+1)}g.
		\end{equation*}
	\end{theorem}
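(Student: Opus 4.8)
The plan is to run the Chern--Lu/Bochner argument underlying Yau's Schwarz lemma \citep{Ya78}, but to feed into it a sharper algebraic inequality that converts the hypothesis on the \emph{holomorphic sectional} curvature of $N$ (rather than on its full bisectional curvature) into the improved constant $\tfrac{\nu+1}{2\nu}$.

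First I would reduce the statement to a pointwise bound on the trace. Put $u:=\operatorname{tr}_g f^*h$, the squared norm of $\partial f$. On the closed set $\{u=0\}$ one has $f^*h=0$, where the asserted inequality is vacuous; it therefore suffices to prove $u\le \tfrac{2k\nu}{K(\nu+1)}$ on the open set $\Omega:=\{u>0\}$ (nonempty as soon as $\nu\ge 1$), because then $f^*h\le u\,g\le \tfrac{2k\nu}{K(\nu+1)}\,g$ pointwise, the first inequality merely expressing that each eigenvalue of $f^*h$ relative to $g$ is at most the sum of all of them.

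The analytic core is a Chern--Lu type inequality: on $\Omega$,
\[
\Delta_g\log u\ \ge\ \frac{\langle\operatorname{Ric}_g,\,f^*h\rangle - Q}{u},\qquad
Q:=\sum_{i,j}R^{N}\!\big(df(e_i),\overline{df(e_i)},df(e_j),\overline{df(e_j)}\big),
\]
valid for $f$ holomorphic and $(M,g)$ K\"{a}hler, where $\{e_i\}$ is a $g$-orthonormal frame at the point considered; for a merely Hermitian target the holomorphicity of $f$ keeps the torsion of the Chern connection of $h$ out of this contraction, which is part of the content of \citep{Ro80}. Since $\operatorname{Ric}_g\ge -kg$ and $f^*h\ge 0$, we have $\langle\operatorname{Ric}_g,f^*h\rangle\ge -ku$. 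To estimate $Q$ I would diagonalize: choosing $\{e_i\}$ so that $f^*h=\operatorname{diag}(\lambda_1,\dots,\lambda_m)$ with $\lambda_i\ge 0$, the vectors $\xi_i:=df(e_i)$ satisfy $\langle\xi_i,\xi_j\rangle_h=\lambda_i\delta_{ij}$, so writing $\xi_i=\sqrt{\lambda_i}\,\eta_i$ the $\{\eta_i:\lambda_i>0\}$ are $h$-orthonormal and at most $\nu$ of the $\lambda_i$ are positive because $\operatorname{rank}_{\mathbb{C}}df\le\nu$. Hence $Q=\sum_{i,j}\lambda_i\lambda_j\,R^{N}(\eta_i,\bar\eta_i,\eta_j,\bar\eta_j)$, and the key algebraic lemma --- obtained by expanding $R^{N}(\zeta,\bar\zeta,\zeta,\bar\zeta)\le -K|\zeta|^{4}$ for $\zeta=\sum_i\sqrt{\lambda_i}\,e^{\sqrt{-1}\theta_i}\eta_i$, averaging over the phases $\theta_i$, then using $R^{N}(\eta_i,\bar\eta_i,\eta_i,\bar\eta_i)\le -K$ and Cauchy--Schwarz on the at most $\nu$ positive $\lambda_i$ --- gives
\[
Q\ \le\ -\frac{K}{2}\Big[\big(\textstyle\sum_i\lambda_i\big)^{2}+\textstyle\sum_i\lambda_i^{2}\Big]\ \le\ -\frac{K(\nu+1)}{2\nu}\,u^{2}.
\]
Combining the two estimates, $\Delta_g\log u\ \ge\ -k+\tfrac{K(\nu+1)}{2\nu}\,u$ on $\Omega$.

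Finally I would invoke Yau's generalized maximum principle, available because $(M,g)$ is complete with Ricci bounded below by $-k$. Since $\Delta_g\log u\le u^{-1}\Delta_g u$, the inequality above gives $\Delta_g u\ge \tfrac{K(\nu+1)}{2\nu}u^{2}-ku$, and a standard barrier comparison on large geodesic balls (using completeness and the Ricci lower bound) forces $\sup_M u<\infty$; the maximum principle then yields points $x_j\in\Omega$ with $\log u(x_j)\to\sup_\Omega\log u$ and $\limsup_j\Delta_g\log u(x_j)\le 0$, and feeding these into the differential inequality gives $0\ge -k+\tfrac{K(\nu+1)}{2\nu}\sup_M u$, i.e.\ $\operatorname{tr}_g f^*h\le \tfrac{2k\nu}{K(\nu+1)}$ throughout $M$, which with the first step is the assertion. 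The heart of the improvement over \citep{Ya78} is the algebraic lemma; the two places I expect to need genuine care are the Chern--Lu identity over a Hermitian (rather than K\"{a}hler) target --- checking that only the holomorphic sectional curvature enters --- and the non-compactness of $M$, i.e.\ the a priori boundedness of $u$ that legitimizes the maximum principle.
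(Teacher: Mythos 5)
The paper itself gives no proof of this statement: it is quoted verbatim from \citep{Ro80}, and what you outline is precisely Royden's own argument --- reduce to bounding $u=\operatorname{tr}_g f^*h$, apply the Chern--Lu inequality $\Delta_g\log u\ge(\langle\operatorname{Ric}_g,f^*h\rangle-Q)/u$, upgrade the holomorphic sectional curvature hypothesis to $Q\le-\tfrac{K(\nu+1)}{2\nu}u^2$ by the polarization lemma together with the rank restriction (at most $\nu$ nonzero eigenvalues, then Cauchy--Schwarz), and conclude with the almost maximum principle of \citep{Ya78} after securing boundedness of $u$. The reduction $f^*h\le u\,g$, the constant bookkeeping, and the handling of noncompactness are all correct and match the cited source.

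The one genuine gap is in the step you flag yourself, and the fix you offer addresses the wrong issue. Averaging $R^N(\zeta,\bar\zeta,\zeta,\bar\zeta)\le-K|\zeta|^4$ over the phases controls only the combination $\sum_{i\ne j}\lambda_i\lambda_j\bigl(R^N(\eta_i,\bar\eta_i,\eta_j,\bar\eta_j)+R^N(\eta_i,\bar\eta_j,\eta_j,\bar\eta_i)\bigr)+\sum_i\lambda_i^2R^N(\eta_i,\bar\eta_i,\eta_i,\bar\eta_i)$; to convert the mixed terms $R^N(\eta_i,\bar\eta_j,\eta_j,\bar\eta_i)$ into the terms $R^N(\eta_i,\bar\eta_i,\eta_j,\bar\eta_j)$ that actually occur in $Q$ you need the K\"ahler symmetry $R_{\alpha\bar\beta\gamma\bar\delta}=R_{\gamma\bar\beta\alpha\bar\delta}$ of the target curvature. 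This is a question of curvature symmetries, not of torsion: holomorphicity of $f$ does keep the torsion of the Chern connection out of the Chern--Lu contraction, but that separate fact does nothing to restore the missing symmetry, so your parenthetical justification does not close the step. For a merely Hermitian target the Chern curvature lacks this symmetry, the holomorphic sectional curvature alone does not control $Q$ (this is exactly the subtlety that later motivated the notion of real bisectional curvature), and so your sketch as written proves the theorem when $N$ is K\"ahler, or more generally when its curvature tensor satisfies the K\"ahler symmetries. To obtain the Hermitian statement as quoted you must either impose that symmetry, strengthen the curvature hypothesis, or give a genuinely different estimate for the mixed terms; simply citing the absence of torsion in the Bochner identity is not enough.
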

	
	In Section \ref{sec2}, we show a theorem about unions of ellipsoids via $f_j$. Basically, we want to find a biholomorphism on the source manifolds by passing to the limit of $f_j$.  However, the most difficult part is that there is an interior pint $q$ and a series biholomorphism $f_j$ so that $f_j(q)$ approaches to the boundary. In this case, the limit of $f_j$ will be a map with degenerated Jacobian, rather than a biholomorphism. The essential part to resolve this difficulty is using Schwarz lemmas to find relation between $Jf_j(q)$ and the location of $f_j(q)$. With this, we can composite each $f_j$ with a injective holomorphic map $\phi_j$ so that the Jocobian of composition $\det J \phi_j\circ f_j$ has non vanishing limit. And then we can discuss the biholomorphisms $\phi_j\circ f_j$ with their limit.
	
	The other application in this note is exhibited in the Section \ref{sec3}. It is obtained from modifying the well-known Yang's argument. For convenience, we give some preliminaries here. The interested readers are referred to \citep{KL11}. The following theorem was used in Yang's argument.

	\begin{theorem}[the almost maximal principles of \citep{Ya78}]
		Let $M$ be a complete Riemannian manifold $M$ with the Ricci curvature bounded from below, then for any $C^2$ smooth function $f:M\rightarrow\mathbb{R}$ that is bounded from above, there exists a sequence $\lbrace p_k\rbrace$ such that 
		\begin{equation*}
		\lim\limits_{k\to\infty}|\nabla T(p_k)|=0, \quad\limsup\limits_{k\to\infty}\Delta T(p_k)\leq 0 \quad \text{and} \quad\lim\limits_{k\to\infty}T(p_k)=\sup\limits_{M}T.
		\end{equation*}
	\end{theorem}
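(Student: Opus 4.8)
The plan is to follow Yau's perturbation argument from \citep{Ya78} (see also \citep{KL11}): one subtracts from the given function a slowly growing exhaustion with controlled Laplacian so that the difference attains a genuine maximum, and then reads the three estimates off the first- and second-order conditions there. Write $T$ for the given $C^2$ function. If $M$ is compact the statement is immediate, since $T$ attains $\sup_M T$ at some point $p$ with $\nabla T(p)=0$ and $\Delta T(p)\le 0$, and one takes $p_k\equiv p$. So assume $M$ is noncompact, fix a base point $o\in M$, and put $r(x):=d(o,x)$, which by Hopf--Rinow is a proper exhaustion with $\sup_M r=\infty$.

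\emph{Step 1: a controlled exhaustion.} I would take $h:=\log(1+r^2)$. Then $|\nabla h|=\dfrac{2r}{1+r^2}\le 1$ everywhere, and away from the cut locus $\mathrm{Cut}(o)$,
\begin{equation*}
\Delta h=\frac{\Delta(r^2)}{1+r^2}-\frac{4r^2}{(1+r^2)^2}\le\frac{\Delta(r^2)}{1+r^2}.
\end{equation*}
The lower Ricci bound enters precisely here, through the Laplacian comparison theorem: it gives $\Delta(r^2)=2r\Delta r+2\le C(1+r^2)$ in the barrier sense, with $C$ depending only on $\dim M$ and the Ricci lower bound, so $\Delta h\le C$. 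Thus $h$ is an exhaustion, smooth off $\mathrm{Cut}(o)$, with $|\nabla h|\le 1$ and $\Delta h\le C$.

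\emph{Step 2: perturb, maximize, and read off the estimates.} Fix $\varepsilon>0$. Choose $q\in M$ with $T(q)>\sup_M T-\varepsilon/2$, and then $\delta>0$ small enough that $\delta\le\varepsilon$, $\delta\,h(q)\le\varepsilon/2$, and $\delta C\le\varepsilon$. Since $T$ is bounded above while $h(x)\to\infty$, the function $T-\delta h$ tends to $-\infty$ at infinity and attains a global maximum at some $p=p_\varepsilon$. Comparing with $q$,
\begin{equation*}
T(p)\ \ge\ (T-\delta h)(p)\ \ge\ (T-\delta h)(q)\ =\ T(q)-\delta\,h(q)\ >\ \sup_M T-\varepsilon.
\end{equation*}
If $p\notin\mathrm{Cut}(o)$ the first- and second-order conditions at the maximum give $\nabla T(p)=\delta\nabla h(p)$ and $\Delta T(p)\le\delta\,\Delta h(p)$, hence $|\nabla T(p)|\le\delta\le\varepsilon$ and $\Delta T(p)\le\delta C\le\varepsilon$. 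Running this construction with $\varepsilon=1/k$ yields points $p_k$ with $|\nabla T(p_k)|\le 1/k$, $\Delta T(p_k)\le 1/k$, and $T(p_k)>\sup_M T-1/k$, which is exactly the assertion.

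The step I expect to be the main obstacle is justifying the derivative conditions when $p\in\mathrm{Cut}(o)$, where $h$ is only continuous. This is handled by Calabi's trick: pick $o'$ on a minimizing geodesic from $o$ to $p$, close enough to $o$ that $p\notin\mathrm{Cut}(o')$; then $\tilde r:=d(o,o')+d(o',\cdot)$ is smooth near $p$, satisfies $\tilde r\ge r$ with equality at $p$, and inherits the comparison bound on $\Delta\tilde r$ up to an error that vanishes as $o'\to o$. The function $T-\delta\log(1+\tilde r^2)$ then has a local maximum at $p$ and is smooth there, and repeating the computation of Step 1 (sending $o'\to o$ for the sharp constant) gives the same bounds $|\nabla T(p)|\le\delta$ and $\Delta T(p)\le\delta C$. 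Equivalently, one may carry out the whole argument with the Laplacian comparison read in the barrier/viscosity sense, absorbing Calabi's trick into standard machinery. Everything else is the elementary computation of Steps 1--2.
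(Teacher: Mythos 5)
Your proof is essentially correct, but note that the paper itself offers nothing to compare it with: this theorem is quoted verbatim from \citep{Ya78} as a background tool (with a pointer to the survey \citep{KL11}), and no proof is given in the text. What you have written is the standard modern proof of the Omori--Yau almost maximum principle: perturb $T$ by $\delta\log(1+r^2)$, whose gradient is bounded by $1$ and whose Laplacian is bounded by a constant via the Laplacian comparison theorem (this is exactly where the Ricci lower bound is used), force a genuine maximum by properness, read off the first- and second-order conditions, and let $\delta=1/k$; the cut-locus issue is the only delicate point and you correctly dispose of it with Calabi's upper-barrier trick, since $\tilde r=d(o,o')+d(o',\cdot)$ touches $r$ from above at $p$ and so $T-\delta\log(1+\tilde r^2)$ still has a local maximum there. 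This is the argument one finds in the survey literature the paper cites, rather than Yau's original derivation, which runs through his gradient-estimate machinery; the perturbation route you chose is more elementary and self-contained, which is a reasonable trade-off here. Two small points worth polishing if you write it up: state explicitly that the superlevel set $\{T-\delta h\ge (T-\delta h)(q)\}$ is compact by Hopf--Rinow (that is what produces the maximum), and observe that at the barrier point you only need a bound $\Delta\tilde h(p)\le C'$ uniform for $o'$ near $o$, so no actual limit $o'\to o$ is required. Also note the statement's ``$f$'' and ``$T$'' denote the same function, as you assumed.
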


	We introduce some terminology. Let $(M, J, h)$ be a K\"ahler mainifold $M$ of dimension m with a K\"{a}hler metric $h$ and a complex structure $J$. The curvature tensor $R$ on $(M, J, h)$ is given by
	\begin{equation*}
	R_{i\bar{j}k\bar{l}}=\dfrac{\partial^2h_{i\bar{j}}}{\partial z_k\partial\bar{z}_l}-\sum_{\alpha,\beta=1}^{m}h^{\alpha\bar{\beta}}\dfrac{\partial h_{i\bar{\beta}}}{\partial z_k}\dfrac{\partial h_{\alpha\bar{j}}}{\partial\bar{z}_l}
	\end{equation*}
	in local coordinates $(z_1,...,z_n)$. The holomorphic sectional curvature for $X\in T_pM$ at $p\in M$ is given by 
	\begin{equation*}
	B(X)=-\dfrac{\sum_{i,j,k,l=1}^{m}R_{i\bar{j}k\bar{l}}X_i\bar{X}_jX_k\bar{X}_l}{(\sum_{i,j=1}^{m}h_{i\bar{j}}X_i\bar{X}_j)^2},
	\end{equation*}
	where 
	\begin{equation*}
	X=\sum_{j=1}^{m}X_j\frac{\partial}{\partial z_j}+\sum_{j=1}^{m}\bar{X}_j\frac{\partial}{\partial \bar{z}_j}.
	\end{equation*}
	
	\section{Monotone unions of Ellipsoids}\label{sec2}
	In this section, we discuss monotone unions of ellipsoids $E_n:=\lbrace(z,w): |z|^2+|w|^{2n}<1\rbrace$ in $\mathbb{C}^2$. Take an arbitrary point $q\in M$, and let $j\rightarrow\infty$, then $f_j(q)$ has a limit point, possibly after passing to a subsequence, because the image of $f_j$ is $\Omega$ which is bounded. Then the location of limits of $f_j(q)$ has two different cases, an interior point of $\Omega$ or a boundary point at $\partial\Omega$.
	
	The following lemma settle the case that the limit of $f_j(q)$ is an interior point of $\Omega$. From now on, we will not distinguish the convergence and the subsequence. We hope it will not confuse the reader.
	
	\begin{lemma}\label{0}
		Let $M$ be a m-dimensional complex manifold with a holomorphic sectional curvature bounded from above by a negative number $-K$ and assume $M$ is a monotone union of $\Omega\subset\mathbb{C}^m$ via $f_j$ where $\Omega$ is a bounded domain in $\mathbb{C}^m$ with a complete K\"{a}hler manifold with the Ricci curvature bounded from below by a negative number $-k$. We also assume there exists an interior point $q\in M$ so that $f_j(q)\rightarrow p\in\Omega$ then $M$ is biholomorphic onto $\Omega$.
	\end{lemma}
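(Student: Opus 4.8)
The plan is to realize the biholomorphism as a limit of the $f_j$, with the Schwarz lemma used to keep the Jacobians non-degenerate and the images away from the boundary. Write $g_j:=f_j^{-1}\colon\Omega\to M_j\subset M$. Since $\Omega$ is bounded, $\{f_j|_{M_k}\}_{j\ge k}$ is a normal family for each fixed $k$, so (passing to a subsequence throughout, as the statement allows) a diagonal argument produces a holomorphic $f\colon M\to\overline\Omega$ with $f_j\to f$ uniformly on compacta; since $f_j(q)\to p$ we get $f(q)=p\in\Omega$, so $f$ is non-constant.

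First I would feed $g_j\colon(\Omega,g_\Omega)\to(M_j,h|_{M_j})$ into Royden's Schwarz lemma: the source is complete K\"ahler with Ricci $\ge-k$, the target inherits holomorphic sectional curvature $\le-K$ from $M$, and $g_j$ has maximal rank $m$, so $g_j^{*}h\le\frac{2km}{K(m+1)}g_\Omega=:C\,g_\Omega$ on $\Omega$. Reading this at $p_j:=f_j(q)$, where $g_j(p_j)=q$, the operator norm of $dg_j(p_j)=\bigl(df_j(q)\bigr)^{-1}$ is at most $\sqrt C$ relative to $h$ at the fixed point $q$ and to $g_\Omega$ at $p_j$; hence $|\det df_j(q)|\ge C^{-m/2}$ in orthonormal frames. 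As $p_j\to p\in\Omega$, the density of $g_\Omega$ at $p_j$ is bounded above and below by positive constants, so the same bound persists for $\det Jf_j(q)$ in fixed local coordinates: $|\det Jf_j(q)|\ge c>0$ uniformly in $j$, and therefore $\det Jf(q)\ne0$. In particular $\det Jf\not\equiv0$ on the connected manifold $M$, so a Hurwitz-type univalence theorem — a locally uniform limit of injective holomorphic maps is injective or has identically vanishing Jacobian, applied on each relatively compact connected open subset of $M$, which is contained in some $M_j$ on which the $f_j$ are injective — forces $f$ to be injective. Being injective and holomorphic between equidimensional complex manifolds, $f$ is a biholomorphism onto the open set $U:=f(M)\subseteq\overline\Omega$.

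It remains to see that $U=\Omega$, for which I would pass the $g_j$ to a limit. Viewed as holomorphic maps $\Omega\to M$, the $g_j$ are distance-decreasing for the Kobayashi metrics, and $g_j(p_j)=q$, so $k_M\bigl(g_j(x),q\bigr)\le k_\Omega(x,p_j)\le k_\Omega(x,p)+k_\Omega(p,p_j)$ with $k_\Omega(p,p_j)\to0$; hence $g_j(p)\to q$ and, for every $E\Subset\Omega$, the sets $g_j(E)$ lie in a single Kobayashi ball about $q$. Granting that this ball is relatively compact in $M$ — i.e. using that $M$ is taut (e.g.\ complete hyperbolic), a point beyond the bare hyperbolicity supplied by the curvature bound — the family $\{g_j\}$ is normal, so after a further subsequence $g_j\to g\colon\Omega\to M$ uniformly on compacta. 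For $a\in\Omega$ we have $f_j(g_j(a))=a$ and $g_j(a)\to g(a)\in M$; since $g(a)$ sits in some $M_{j_0}$ and $f_j\to f$ uniformly near $g(a)$, letting $j\to\infty$ gives $f(g(a))=a$, so $f\circ g=\mathrm{id}_\Omega$ and $f$ is onto $\Omega$. Being also injective, $f$ is the desired biholomorphism $M\to\Omega$ (equivalently, $f_j\circ g_j=\mathrm{id}_\Omega$ and $g_j\circ f_j=\mathrm{id}_{M_j}$ pass to the limit, exhibiting $f$ and $g$ as mutual inverses). The main obstacle, I expect, is exactly this confinement step: one must ensure the limit of $f_j^{-1}$ exists and stays inside $M$ rather than escaping toward $\partial\Omega$, which requires an input about $M$ — tautness or completeness — on top of hyperbolicity, and which also rules out $f$ sending a point of $M$ into $\partial\Omega$; by comparison, the Jacobian lower bound is a direct application of the Schwarz lemma and is the feature separating this interior-limit case from the boundary-limit case, which instead needs recentering by auxiliary injective maps $\phi_j$.
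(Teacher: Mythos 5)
Your skeleton is the same as the paper's: extract $F=\lim f_j$ by Montel, apply Royden's Schwarz lemma to the inverses $f_j^{-1}$ to prevent degeneration at $q$, prove injectivity of the limit, then get surjectivity by passing $f_j^{-1}$ to a limit $G$ with $F\circ G=\mathrm{id}$. Your first half in fact streamlines the paper's: where the paper deduces $\det Jf_j(q)\not\to 0$ indirectly, by first using the distance estimate \eqref{1} to confine $f_j^{-1}$ on a small ball $B_p$ and produce a local inverse with $F\circ G=\mathrm{id}$ on $B_p$, and then proves injectivity of $F$ through the estimate \eqref{2}, you read the Schwarz inequality for $f_j^{-1}$ infinitesimally at $f_j(q)$ to get the uniform lower bound $|\det Jf_j(q)|\gtrsim C^{-m/2}$ (legitimate, since $p_j\to p$ is interior so the metric densities are comparable to Euclidean there) and then invoke the several-variable Hurwitz/univalence theorem on connected relatively compact subsets. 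Both routes are sound; yours is tighter.

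The genuine gap is the one you flag yourself: your surjectivity step is conditional on Kobayashi balls of $M$ being relatively compact, i.e.\ on $M$ being taut or complete hyperbolic, and neither is a hypothesis of Lemma~\ref{0} (the curvature bound gives hyperbolicity only). The paper does not add such a hypothesis; it extracts the needed compactness from the injectivity already proved. Once $F$ is injective with nowhere vanishing Jacobian, $M$ is biholomorphic onto the bounded open set $F(M)\subset\mathbb{C}^m$ --- this is what the paper's terse assertion that ``$F$ is 1-1 and $M$ is a taut manifold'' is standing in for --- and then the inverse family, transported by $F$ to the maps $F\circ f_j^{-1}\colon\Omega\to F(M)\subset\mathbb{C}^m$, is uniformly bounded, hence normal by Montel, with no global assumption on $M$ at all. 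Note also that near $p$ nothing beyond \eqref{1} is needed: it places $f_j^{-1}(B_p)$ in a small $d_M$-ball about the fixed point $q$, and such a ball sits inside a compact coordinate neighborhood of $q$ even if the metric on $M$ is incomplete (leaving a fixed compact neighborhood costs a definite length), which is how the paper obtains $G$ on $B_p$ and $F\circ G=\mathrm{id}$ there before extending. So the repair of your proposal is mainly a matter of ordering: prove injectivity first (as you do), then use $F$ itself, rather than tautness, to confine the inverses; what still has to be addressed --- and is treated only briefly in the paper as well --- is why the limit of $F\circ f_j^{-1}$ cannot take values on $\partial F(M)\cap\Omega$, which is the one place where your confinement worry genuinely persists.
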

	
	\begin{proof}
		We first show that $M$ is biholomorphic into $\Omega$. Since $\Omega$ is bounded, $f_j$ is a normal family of biholomorphisms. Let $f_j$ converge to a holomorphic map $F$. Considering the inverses $\lbrace f^{-1}_j\rbrace_{j=1}^\infty$, we want to show they are locally bounded at least in a small geodesic ball $B_p$ centered $p\in\Omega$ with radius $\epsilon>0$. Indeed, by the Schwarz lemma in \citep{Ro80}, 
		\begin{equation*}
		(f^{-1}_j)^*\mathrm{d}_M\leq C\mathrm{d}_\Omega
		\end{equation*}
		for each $j>0$, where $C=\dfrac{2m}{m+1}\dfrac{k}{K}$. Let $N>0$ be big so that $f_j(q)\in B_p$ for all $j>N$. We also take arbitrarily $z\in B_p$ and now we have
		\begin{equation}\label{1}
		\mathrm{d}_M(q, f_j^{-1}(z))\leq C\mathrm{d}_\Omega(f_j(q), z)<2C\epsilon,
		\end{equation}
		for $j>N$. This means $f_j^{-1}$ is locally bounded in $B_p$ and we denote $G$ by its limit. One can see now $F\circ G(z)=z$ in $B_p$ because both limits of $f_j$ and $f_j^{-1}$ are uniformly convergent on compact subsets of $B_p$. It implies $\det Jf_j(q)\not\rightarrow 0$ and hence the limit of  $(\det Jf_j)(z)$ is nowhere vanishing for arbitrary $z\in M$, where $J$ denotes the Jacobian. The reason is what follows. By the Cauchy estimates, the fact that $\lbrace f_j\rbrace_{j=1}^\infty$ implies that $\lbrace\det Jf_j\rbrace_{j=1}^\infty$ is also normal. But $\lbrace\det Jf_j\rbrace_{j=1}$ is nowhere zero for each $j>0$ because $f_j$ is biholomorphism and then by Hurwitz theorem, $\det JF$ is a zero function or nowhere zero. And the conclusion follows by the fact that $\det Jf_j(q)\not\rightarrow 0$. Now $\det Jf_j(z)\not\rightarrow 0$ for all $z\in M$ and hence $\det JF(z)$ is nonzero everywhere which also implies $F(M)$ is open by the open mapping theorem. 
		
		It is the time to show $F$ is also 1-1. Indeed, otherwise, there are two interior points $z_0, w_0\in M$ so that $\mathrm{d}_\Omega(f_j(z_0), f_j(w_0))\rightarrow 0$ and we consider the Schwarz lemma of Royden \citep{Ro80} for $f_j^{-1}$ again,
		\begin{equation}\label{2}
		\mathrm{d}_M(z_0,w_0)\leq C\mathrm{d}_\Omega(f_j(z_0), f_j(w_0)).
		\end{equation}
		Since $\det Jf_j(z)$ does not approach to zero for all $z\in M$, $f(z)$ does not approach to a boundary point. Specifically, $f_j(z_0), f_j(w_0)$ do not approach to the boundary of $\Omega$ where the K\"{a}heler metric blows up, and so $\mathrm{d}_\Omega(f_j(z_0), f_j(w_0))\rightarrow 0$ implies $z_0=w_0$ by the Equation (\ref{2}). So $F$ is 1-1 and $M$ is a taut manifold.
		
		We can show now $M$ is biholomorphic onto $\Omega$. For this, we just need to show $G$ is 1-1. We use a well-known argument here. Again by $F$ we denote the limit of $f^j$ and by $G$, we denote the limit of $f_j^{-1}$. Since $M$ is taut, both of them make sense now. Suppose $z',z''\in\Omega$ and $G(z')=G(z'')$,
		\begin{equation*}
		\begin{split}
		z'-z''=&f_j\circ f^{-1}_j(z')-f_j\circ f^{-1}_j(z'')\\=&f_j\circ f^{-1}_j(z')-F\circ f^{-1}_j(z')+F\circ f^{-1}_j(z')-F\circ f^{-1}_j(z'')+F\circ f^{-1}_j(z'')-f_j\circ f^{-1}_j(z'').
		\end{split}
		\end{equation*}
		Let $j\rightarrow\infty$. Since both $\lbrace f_j^{-1}\rbrace_{j=1}^\infty$ and $\lbrace f_j\rbrace_{j=1}^\infty$ are normal, we have $z'=z''$.
	\end{proof}
	
	By the similar argument, we can verify the following corollary. Instead of looking at only the exhaustive subsets of $M$ in the previous lemma, the following corollary consider both exhaustive subsets of $M$ and $\Omega$. It is also a key to the problem of monotone unions of ellipsoids.
	
	\begin{corollary}\label{0.5}
		Let $M$ be a m-dimensional complex manifold with holomorphic sectional curvature bounded from above by a negative number $-K$ and assume $M=M_j$ where $M_j\subset M_{j+1}$ and $f_j$ is biholomorphism from $M_j$ onto $\Omega_j\subset\Omega\subset\mathbb{C}^m$. Suppose $\Omega$ is a bounded domain in $\mathbb{C}^m$ and $\Omega_j$ is a complete K\"{a}hler manifold with the Ricci curvatures bounded from below by a same negative number $-k$ independent with $j$. We also assume there exists a  point $q\in M$ so that $\det Jf_j(q)\not\rightarrow 0$ then $F$ is 1-1 and hence $M$ is taut.
	\end{corollary}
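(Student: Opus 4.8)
The plan is to adapt the proof of Lemma~\ref{0}; the one genuinely new feature is that the fixed target $\Omega$ is replaced by the varying domains $\Omega_j$, so a single distance function $\mathrm{d}_\Omega$ is no longer available. First I would set up the limit map as before. Since $\Omega$ is bounded, on every $L\Subset M$ (and $L\subset M_j$ for $j$ large) the restrictions $f_j|_L$ are uniformly bounded, so after passing to a subsequence $f_j\to F$ locally uniformly for a holomorphic $F\colon M\to\overline\Omega$; by the Cauchy estimates $\det Jf_j\to\det JF$ locally uniformly as well. Each $\det Jf_j$ is nowhere zero because $f_j$ is a biholomorphism onto $\Omega_j$, so applying Hurwitz's theorem on a small ball around an arbitrary point and using that $M$ is connected, $\det JF$ is either identically zero or nowhere zero. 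Since $\det Jf_j(q)\to\det JF(q)$ while $\det Jf_j(q)\not\to0$, we get $\det JF(q)\neq0$, hence $\det JF$ is nowhere zero; consequently $\det Jf_j(z)\not\to0$ for \emph{every} $z\in M$, $F$ is locally biholomorphic, $F(M)$ is open, and being an open subset of $\overline\Omega$ with $\Omega$ a bounded domain it must lie in $\Omega$.

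It remains to prove that $F$ is injective, and this is where the argument departs from Lemma~\ref{0}. Mimicking that proof, if $F(z_0)=F(w_0)=:p^\ast$ with $z_0\neq w_0$ one would like to combine the Schwarz lemma of \citep{Ro80} for $f_j^{-1}\colon\Omega_j\to M$,
\begin{equation*}
\mathrm{d}_M(z_0,w_0)\leq C\,\mathrm{d}_{\Omega_j}\!\bigl(f_j(z_0),f_j(w_0)\bigr),\qquad C=\tfrac{2m}{m+1}\tfrac{k}{K},
\end{equation*}
with the assertion $\mathrm{d}_{\Omega_j}(f_j(z_0),f_j(w_0))\to0$ to force $z_0=w_0$. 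The difficulty — and the step I expect to be the real obstacle — is precisely this last assertion: the metrics on $\Omega_j$ change with $j$ and may well blow up near $p^\ast$ unless one knows that $p^\ast$ sits uniformly deep inside $\Omega_j$ and that the metrics $g_{\Omega_j}$ are locally uniformly bounded there, which would require squeezing a quantitative consequence out of the common Ricci lower bound.

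I would bypass the metric estimate altogether by exploiting that each $f_j$ is \emph{globally} injective on $M_j$. Assume $F(z_0)=F(w_0)=p^\ast$ with $z_0\neq w_0$. Since $F$ is locally biholomorphic, pick disjoint neighbourhoods $U\ni z_0$, $U'\ni w_0$ with compact closures that $F$ maps biholomorphically onto a common ball $B(p^\ast,\rho)\Subset\Omega$. For $j$ large, $\overline U\cup\overline{U'}\subset M_j$ and $f_j\to F$ uniformly on $\overline U,\overline{U'}$, so a standard mapping-degree (Rouch\'e) argument — using that $F|_{\partial U}$ stays Euclidean distance $\geq\rho/2$ away from $B(p^\ast,\rho/2)$ — yields $f_j(U)\supset B(p^\ast,\rho/2)$ and $f_j(U')\supset B(p^\ast,\rho/2)$. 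But $U\cap U'=\emptyset$ and $f_j$ is injective on $M_j$, hence $f_j(U)\cap f_j(U')=\emptyset$, contradicting that both images contain $B(p^\ast,\rho/2)$. Therefore $F$ is injective.

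Finally, to obtain tautness, observe that $F\colon M\to F(M)$ is then a biholomorphism onto the bounded open set $\Omega':=F(M)\subset\mathbb{C}^m$, whose topological boundary is at positive distance from each compact subset of $\Omega'$. Given any $h_j\in\mathrm{Hol}(\mathbb{B}^1,M)$, the family $\{F\circ h_j\}$ in $\mathrm{Hol}(\mathbb{B}^1,\Omega')$ is bounded, hence normal; after a subsequence $F\circ h_j\to h_\infty$ locally uniformly with $h_\infty\colon\mathbb{B}^1\to\overline{\Omega'}$ holomorphic, and by the open mapping theorem either $h_\infty(\mathbb{B}^1)\subset\Omega'$ — in which case $h_j=F^{-1}\circ(F\circ h_j)\to F^{-1}\circ h_\infty$ locally uniformly into $M$ — or $h_\infty(\mathbb{B}^1)\subset\partial\Omega'$, in which case $\{h_j\}$ is compactly divergent. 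In either case $M$ is taut, which is the assertion of Corollary~\ref{0.5}. The remaining verifications (normality of the Jacobians, the degree computation, the Hurwitz dichotomy) are routine transcriptions of the arguments already used for Lemma~\ref{0}.
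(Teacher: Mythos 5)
Your first two paragraphs are correct, and the injectivity argument is a genuinely different route from the paper's. The paper's outline treats injectivity exactly as in Lemma \ref{0}: apply Royden's Schwarz lemma to $f_j^{-1}\colon\Omega_j\to M$ to get $\mathrm{d}_M(z_0,w_0)\leq C\,\mathrm{d}_{\Omega_j}(f_j(z_0),f_j(w_0))$, and then argue that, because $\det Jf_j$ does not degenerate anywhere, the image points stay away from the boundary ``where the metric blows up,'' so Euclidean convergence of $f_j(z_0)$ and $f_j(w_0)$ to a common point forces the right-hand side to $0$. As you note, in the corollary's setting this needs locally uniform control of the varying metrics $g_{\Omega_j}$ near the limit point, which the paper's sketch does not supply. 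Your Rouch\'e/degree argument (disjoint neighbourhoods $U,U'$ mapped by $F$ onto a common ball, uniform convergence forcing $f_j(U)$ and $f_j(U')$ both to cover a smaller ball, contradicting injectivity of $f_j$ on $M_j$) bypasses the metric estimate entirely and uses only normality, Hurwitz, and the global injectivity of $f_j$; it is the standard several-variables Hurwitz-type argument, is valid, and does not even use the curvature hypotheses at this step. One small ordering point: since $\det Jf_j(q)\not\to0$ only provides a subsequence with $|\det Jf_j(q)|\geq\varepsilon$, you should pass to that subsequence \emph{before} extracting the normal limit $F$, or else the chosen $F$ could a priori satisfy $\det JF(q)=0$.

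The tautness paragraph, however, has a genuine gap. A bounded domain in $\mathbb{C}^m$ need not be taut, and the dichotomy you attribute to the open mapping theorem --- either $h_\infty(\mathbb{B}^1)\subset\Omega'$ or $h_\infty(\mathbb{B}^1)\subset\partial\Omega'$ --- is false for a general bounded $\Omega'=F(M)$. For example, with $\Omega'=\mathbb{D}^2\setminus\lbrace(0,0)\rbrace$ and $h_j(\zeta)=(\zeta,1/j)$, the limit $h_\infty(\zeta)=(\zeta,0)$ meets $\partial\Omega'$ only at $\zeta=0$, so $\lbrace h_j\rbrace$ neither converges to a map into $\Omega'$ nor is compactly divergent; this domain is in fact not taut (taut domains are pseudoconvex). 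The open mapping theorem gives nothing here, since $h_\infty$ maps a disc into $\mathbb{C}^m$ and may be degenerate. So boundedness of $F(M)$ alone does not yield tautness: you need extra input, e.g.\ complete hyperbolicity of $M$ coming from the negative upper bound on holomorphic sectional curvature together with a completeness assumption (in the spirit of the Greene--Wu theorem cited in the introduction), or some argument that $F(M)$ itself is taut. In fairness, the paper's own outline merely asserts ``this means $M$ is taut'' without proof, but as written your justification of that final assertion does not hold.
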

	For the sake of completeness, we also include a short outline of proof.
	\begin{proof}[Outline of  proof]
		Since $\Omega$ is bounded, $\Omega_j\subset\Omega$ is bounded too for each $j>0$. Hence $\lbrace f_j\rbrace_{j=1}^\infty$ is still normal. By the $\det J f_j(q)\not\rightarrow 0$, we can see $\det Jf_j(z)\not\rightarrow 0$ everywhere for $z\in M$, where $\lbrace\det Jf_j(z)\rbrace_{j=1}^\infty$ is normal because of the Cauchy estimates. This means, for any $z\in M$, $f_j(z)$ does not approach to $\partial\Omega$. So by the Schwarz lemma of \citep{Ro80}, we find the limit $F$ of $f_j$ is 1-1. Moreover, this means $M$ is taut.
	\end{proof}
	
	The Lemma \ref{0} and Corollary \ref{0.5} tell us that if there exists one point $q$ such that $f_j(q)\rightarrow p\in\Omega$, then for any point $z\in M$, we have $f_j(z)$ approaching to an interior point of $\Omega$. Furthermore, the limit of $f_j$ forms a biholomorphism. However, this is not the only case. Indeed, sometimes $f_j(q)$ can approach to a boundary point of $\Omega$ and this brings trouble for getting the biholomorphism. For example if $\Omega$ is of finite type, then the image of $F=\lim\limits_{j\to\infty}f_j$ will be a constant function which of course cannot be the biholomorphic map we look for. The reason why the limit is a constant map is the $\det Jf_j(q)\rightarrow 0$. So we need to composite each $f_j$ with a biholomorphic map $\phi_j$ so that the result map $\det J\phi_j\circ f_j$ has a nonzero limit. To find the appropriate $\phi_j$ we need to estimate the speed of decay for $\det Jf_j(q)$. It appears the speed of decay can be arbitrary, but indeed, the decay is constrained by the location of $f_j$ due to an application of the Schwarz lemma as follows.

	\begin{proposition}\label{2.5}
		Let $M$ be a m-dimensional complex manifold with the holomorphic sectional curvature bounded from above by a negative number $-K$ and assume $M$ is a monotone union of $\Omega\subset\mathbb{C}^m$ via $f_j$ where $\Omega$ is a bounded domain in $\mathbb{C}^m$ with a complete Bergman metric with the Ricci curvature bounded from below by a negative number $-k$. We also assume there exists a point $q\in M$ so that $f_j(q)\rightarrow p\in\partial\Omega$ where $p$ is strongly pseudoconvex. Then $\dfrac{|Jf_j(q)|}{\delta(f_j(q))^{\frac{m+1}{2}}}\gtrsim\eta$ for some positive $\eta$, where $\delta$ is the distance function of $\Omega$.
	\end{proposition}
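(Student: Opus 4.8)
\emph{Proof proposal.} The plan is to apply the Schwarz lemma of \citep{Ro80} to the inverse biholomorphisms $f_j^{-1}\colon\Omega\to M_j\subset M$, pass to determinants, and then invoke the known blow-up rate of the Bergman volume form at a strongly pseudoconvex boundary point. Write $g_M$ for the given Hermitian metric on $M$, whose holomorphic sectional curvature is $\le -K$, and $g_B$ for the Bergman metric of $\Omega$, which by hypothesis is complete K\"ahler with Ricci curvature $\ge -k$. Each $f_j^{-1}$ is a biholomorphism onto its image, hence of maximal rank $\nu=m$, so the Schwarz lemma of \citep{Ro80} gives, as Hermitian forms on $\Omega$,
\begin{equation*}
(f_j^{-1})^{*}g_M\ \le\ \frac{2km}{K(m+1)}\,g_B .
\end{equation*}

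First I would pass to the associated volume forms, equivalently take determinants of the coefficient matrices in the standard coordinates of $\Omega\subset\mathbb{C}^{m}$, using that $0\le A\le B$ implies $\det A\le\det B$ for positive semidefinite Hermitian matrices. Fix once and for all a holomorphic chart near $q$; let $\det g_M(q)>0$ denote the determinant of the coefficient matrix of $g_M$ at $q$ in that chart, and let $\det Jf_j(q)$ denote the Jacobian determinant of $f_j$ with respect to that chart and the standard coordinates of $\Omega$ (this is the quantity written $|Jf_j(q)|$ in the statement). Evaluating the determinant inequality at $z=f_j(q)$ and using the pullback (chain rule) formula for $(f_j^{-1})^{*}g_M$ together with $f_j^{-1}(f_j(q))=q$ and $\det Jf_j^{-1}(f_j(q))=(\det Jf_j(q))^{-1}$, the left side becomes $\det g_M(q)/|\det Jf_j(q)|^{2}$ and one obtains
\begin{equation*}
\frac{\det g_M(q)}{|\det Jf_j(q)|^{2}}\ \le\ \Big(\frac{2km}{K(m+1)}\Big)^{m}\det g_B\big(f_j(q)\big).
\end{equation*}
Since $\det g_M(q)$ is a fixed positive constant, this already gives $|\det Jf_j(q)|^{2}\gtrsim 1/\det g_B(f_j(q))$, with an implied constant depending only on $m,k,K$ and $q$.

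Next I would use that $p\in\partial\Omega$ is strongly pseudoconvex. By the boundary behavior of the Bergman metric at a strongly pseudoconvex boundary point (Klembeck's theorem \citep{Kl78}, or equivalently Fefferman's expansion of the Bergman kernel, together with the localization of the Bergman kernel near a strongly pseudoconvex piece of $\partial\Omega$), there exist a neighborhood $U$ of $p$ and a constant $C_{p}>0$ such that
\begin{equation*}
\det g_B(z)\ \le\ \frac{C_{p}}{\delta(z)^{m+1}},\qquad z\in\Omega\cap U .
\end{equation*}
Since $f_j(q)\to p$, we have $f_j(q)\in U$ for all large $j$; combining this with the previous display yields $|\det Jf_j(q)|^{2}\gtrsim\delta(f_j(q))^{m+1}$, that is,
\begin{equation*}
\frac{|\det Jf_j(q)|}{\delta(f_j(q))^{\frac{m+1}{2}}}\ \ge\ \eta,\qquad
\eta:=\Big(\det g_M(q)\,\Big(\tfrac{K(m+1)}{2km}\Big)^{m}\big/\,C_{p}\Big)^{1/2}>0 ,
\end{equation*}
for all sufficiently large $j$, which is the assertion.

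The step I expect to be the main obstacle is the last one: justifying the exponent $m+1$ in the blow-up of $\det g_B$ and, since $\Omega$ need not be globally strongly pseudoconvex (indeed the ellipsoids $E_{n}$ to which this will be applied carry a circle of weakly pseudoconvex boundary points along $\{w=0\}$), making the estimate for $\det g_B$ a purely local statement valid on $\Omega\cap U$. This is exactly where the strong pseudoconvexity of $p$ enters: at a weakly pseudoconvex boundary point the corresponding exponent changes, so that case has to be treated by a separate argument.
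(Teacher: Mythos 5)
Your proposal follows essentially the same route as the paper: apply Royden's Schwarz lemma to $f_j^{-1}$, take determinants of the resulting matrix inequality, use $\det Jf_j^{-1}(f_j(q))=(\det Jf_j(q))^{-1}$, and bound $\det g_B$ near the strongly pseudoconvex point $p$ by $C_p\,\delta^{-(m+1)}$ via Fefferman's expansion. Your version is in fact slightly more careful than the paper's (you keep the exponent $m$ on the Schwarz-lemma constant and note explicitly that the Bergman estimate must be localized to a neighborhood of $p$, since $\partial E_n$ also has weakly pseudoconvex points), but these are refinements of the same argument, not a different one.
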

	\begin{proof}
		For $f^{-1}_j$, by the Schwarz lemma of \citep{Ro80}, we have $(f^{-1}_j)^*g_M\leq Cg_\Omega$ for some constant $C$ where $g_M$ is the metric on $M$ and by $g_\Omega$, we denote the Bergman metric of $\Omega$. In local coordinates, we have for any tangent vector $X_{o}\in\mathrm{T}_{o}\Omega$ at $o\in\Omega$
		\begin{equation*}
		((f^{-1}_j)_*X_{o})'G_M(f^{-1}_j(o))(f^{-1}_j)_*X_{o}\leq C X'_{o}G_{\Omega}(o)X_{o},
		\end{equation*}
		where we denote the conjugate transpose by $'$ and matrices of $g_M$ and $g_\Omega$ by $G_M$ and $G_\Omega$ respectively. For each $j>0$, we let $o=f_j(q)$ and we have
		\begin{equation*}
		((f^{-1}_j)_*X_{f_j(q)})'G_M(f^{-1}_j(f_j(q)))(f^{-1}_j)_*X_{f_j(q)}\leq C X'_{f_j(q)}G_{\Omega}(f_j(q))X_{f_j(q)},
		\end{equation*}
		Without loss of the generality, we pick up the coordinates on $M$ at $q$ so that $G_m$ is identity matrix. Hence $ (Jf^{-1}_j(f_j(q)))'Jf^{-1}_j(f_j(q))\leq C G_{\Omega}(f_j(q))$ and by Minkowski determinant theorem, we also have 
		\begin{equation}\label{2.1}
		|\det Jf^{-1}_j(f_j(q))|^2\leq C |\det G_{\Omega}(f_j(q))|.
		\end{equation} 
		But $G_\Omega$ is a metric around a strongly pseudoconvex point $p$, so by \citep{Fe74}, it is equivalent to the $\partial\bar{\partial}(\log\delta)$ up to nonzero constant. Moreover, by computation the second order Taylor expansion of $\delta$ at $p$, we also have 
		\begin{equation*}
		|\det G_\Omega(o)|\leq\dfrac{c_0}{\delta(o)^{m+1}}
		\end{equation*} 
		for some $c_0>0$, when $o$ close to $p$. Again, put $o=f_j(q)$, we have
		\begin{equation}\label{2.2}
		|\det G_\Omega(f_j(q))|\leq\dfrac{c_0}{\delta(f_j(q))^{m+1}}
		\end{equation} 
		for sufficient big $j>0$. Since $\det Jf^{-1}_j(f_j(q))\det Jf_j(q)=\mathrm{id}$, we have, by the Equation (\ref{2.1}) and (\ref{2.2}), that $\dfrac{|\det Jf_j(q)|}{\delta(f_j(q))^\frac{m+1}{2}}>\dfrac{1}{\sqrt{c_0C}}$ for sufficient $j>0$. We let $\eta=\dfrac{1}{\sqrt{c_0C}}$, and thus get the desired result.
	\end{proof}
	
	One of the main techniques in this paper was motivated by a simple observation in one variable. Specifically, a small disc can approaches to a bigger disc by certain M\"{o}bius transforms of the bigger disc.
	
	\begin{lemma}[Two discs lemma]
		Suppose there is a faimily of M\"{o}bius transfrom on the unit disc $\psi_j(z)=\dfrac{z+\alpha_j}{1+\bar{\alpha}_jz}$ where $\alpha_j\in\mathbb{R}$ and $\alpha_j\rightarrow 1$. We also define a small disc $\mathcal{D}_s$ center at $b\in\mathbb{R}$ with radius $1-b$ where $b>0$ close to $1$. Then $\phi^{-1}_j(\mathbb{D}_s)\rightarrow\mathbb{D}$ in sense of convergence in increasing subsets.
	\end{lemma}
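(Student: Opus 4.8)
The plan is to compute $\psi_j^{-1}(\mathbb{D}_s)$ explicitly as a round disc and to watch its two real intersection points degenerate as $\alpha_j\to 1$. Since the $\alpha_j$ are real with $\alpha_j\to 1$, we have $\alpha_j\in(0,1)$ for all large $j$, so each $\psi_j$ is an automorphism of $\mathbb{D}$ with inverse $\psi_j^{-1}(w)=\dfrac{w-\alpha_j}{1-\alpha_j w}$; its only pole $w=1/\alpha_j$ lies outside $\overline{\mathbb{D}}$, it fixes the boundary point $1$, and it carries the real axis to itself because its coefficients are real. Since $\overline{\mathbb{D}_s}\subset\overline{\mathbb{D}}$ meets $\partial\mathbb{D}$ only at $1$, the image $\psi_j^{-1}(\mathbb{D}_s)$ is again a bounded round disc, and it is contained in $\mathbb{D}=\psi_j^{-1}(\mathbb{D})$.

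Next I would pin down which disc it is. The circle $\partial\mathbb{D}_s$ is internally tangent to $\partial\mathbb{D}$ at $1$, and M\"obius maps send circles to circles and preserve tangency, so $\psi_j^{-1}(\partial\mathbb{D}_s)$ is a circle internally tangent to $\partial\mathbb{D}$ at $\psi_j^{-1}(1)=1$. Moreover $\partial\mathbb{D}_s\cap\mathbb{R}=\{2b-1,\,1\}$, hence $\psi_j^{-1}(\partial\mathbb{D}_s)\cap\mathbb{R}=\{c_j,\,1\}$ with
\[
c_j:=\psi_j^{-1}(2b-1)=\frac{(2b-1)-\alpha_j}{1-(2b-1)\alpha_j}.
\]
A circle internally tangent to $\partial\mathbb{D}$ at $1$ from inside has its center on the segment $(0,1)$, so it is determined by its second real intersection point; therefore $\psi_j^{-1}(\mathbb{D}_s)$ is exactly the disc $D_j$ having diameter the segment $[c_j,1]$, i.e. center $\tfrac{1+c_j}{2}$ and radius $\tfrac{1-c_j}{2}$ — the bounded region cut off by that chord, which is consistent with $0=\psi_j^{-1}(\alpha_j)\in D_j$ once $\alpha_j\in(2b-1,1)$.

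It then remains to let $j\to\infty$. Substituting $\alpha_j\to 1$ gives $c_j\to\dfrac{(2b-1)-1}{1-(2b-1)}=-1$, and since $\dfrac{dc}{d\alpha}=\dfrac{(2b-1)^2-1}{(1-(2b-1)\alpha)^2}<0$, if the $\alpha_j$ are increasing the $c_j$ decrease. Writing $z\in D_c$ as $\bigl|z-\tfrac{1+c}{2}\bigr|^2<\bigl(\tfrac{1-c}{2}\bigr)^2$ and using $\bigl(\tfrac{1+c}{2}\bigr)^2-\bigl(\tfrac{1-c}{2}\bigr)^2=c$, this condition becomes $|z|^2-\mathrm{Re}\,z+c(1-\mathrm{Re}\,z)<0$, which for $\mathrm{Re}\,z<1$ is relaxed as $c$ decreases; hence $D_j\subset D_{j+1}$. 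Finally, for fixed $z\in\mathbb{D}$ the left-hand side of this inequality tends to $|z|^2-1<0$ as $c_j\to-1$, so $z\in D_j$ for all large $j$, giving $\bigcup_j D_j=\mathbb{D}$. That is precisely $\psi_j^{-1}(\mathbb{D}_s)\to\mathbb{D}$ in the sense of increasing exhaustion.

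I do not anticipate a genuine obstacle; the argument is elementary. The two points deserving care are: (i) verifying that $\psi_j^{-1}(\mathbb{D}_s)$ is the bounded disc spanned by the chord $[c_j,1]$ and not the complementary one, which is settled by $\psi_j^{-1}(\mathbb{D}_s)\subset\mathbb{D}$ (equivalently by $0\in\psi_j^{-1}(\mathbb{D}_s)$ for large $j$); and (ii) if the $\alpha_j$ are not assumed monotone, one either passes to a monotone subsequence or simply reads ``convergence in increasing subsets'' as the exhaustion $\bigcup_j D_j=\mathbb{D}$, which follows verbatim from $c_j\to-1$.
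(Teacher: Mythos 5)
Your proposal is correct, and it takes a genuinely different (more synthetic) route than the paper. The paper proves the lemma by brute force: it expands the inequality $|\psi_j(z)-b|<1-b$, completes the square to read off an explicit center $o_j$ and radius $r_j$ for $\psi_j^{-1}(\mathcal{D}_s)$, and then shows $o_j\to 0$ and $r_j\to 1$ by L'H\^{o}pital's rule. You instead exploit the mapping properties of $\psi_j^{-1}$ (real coefficients, pole outside $\overline{\mathbb{D}}$, fixed boundary point $1$, circles to circles, tangency preserved) to identify $\psi_j^{-1}(\mathcal{D}_s)$ as the disc with real diameter $[c_j,1]$, where $c_j=\psi_j^{-1}(2b-1)\to -1$; this is the same disc as in the paper, since your description gives center $\tfrac{1+c_j}{2}$ and radius $\tfrac{1-c_j}{2}$ (e.g.\ for $\alpha_j=\tfrac12$, $b=\tfrac{9}{10}$ both computations give center $\tfrac34$, radius $\tfrac14$). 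Your approach buys two things: the algebra is lighter and the limiting behavior is transparent from $c_j\to-1$; and, more substantively, you actually address the ``increasing subsets'' clause, proving nestedness for monotone $\alpha_j$ via the inequality $|z|^2-\mathrm{Re}\,z+c(1-\mathrm{Re}\,z)<0$ and, in general, the exhaustion $\bigcup_j D_j=\mathbb{D}$ together with $D_j\subset\mathbb{D}$ --- a point the paper leaves implicit after computing the limits of the centers and radii. The paper's completed-square computation, on the other hand, is the template it reuses verbatim in the two balls lemma, where the extra $w$-term is carried along in the same bookkeeping; if you wanted your argument to replace that one as well, you would need a small additional step (e.g.\ treating the $w$-component scaling separately), but for the two discs lemma as stated your proof is complete.
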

	\begin{proof}
		By the condition, we want to get $\lbrace z\in\mathbb{C}^2: |f_j(z)-b|<1-b\rbrace$ for each $j$.
		\begin{equation*}
		\begin{split}
		&|\dfrac{z+\alpha_j}{1+\bar{\alpha}_jz}-b|<1-b\\\Leftrightarrow&|z+\alpha_j-b-\bar{\alpha}_jbz|^2<(1-b)^2|1+\bar{\alpha }_jz|^2\\\Leftrightarrow&|z+\dfrac{(\alpha_j-b)(1-\alpha_jb)-(1-b)^2\alpha_j}{|1-\bar{\alpha}_jb|^2-(1-b)^2|\alpha_j|^2}|^2<\dfrac{|1-b|^2-|\alpha_j-b|^2}{|1-\bar{\alpha}_jb|^2-(1-b)^2|\alpha_j|^2}\\&+\dfrac{|(\alpha_j-b)(1-\alpha_jb)-(1-b)^2\alpha_j|^2}{(|1-\bar{\alpha}_jb|^2-(1-b)^2|\alpha_j|^2)^2}.
		\end{split}
		\end{equation*}
		Thus for the j-th step, it is a disc centered at \begin{equation*}
		o_j=-\dfrac{(\alpha_j-b)(1-\alpha_jb)-(1-b)^2\alpha_j}{|1-\bar{\alpha}_jb|^2-(1-b)^2|\alpha_j|^2}
		\end{equation*} 
		with radius
		\begin{equation*}
		r_j=\sqrt{\dfrac{|1-b|^2-|\alpha_j-b|^2}{|1-\bar{\alpha}_jb|^2-(1-b)^2|\alpha_j|^2}+\dfrac{|(\alpha_j-b)(1-\alpha_jb)-(1-b)^2\alpha_j|^2}{(|1-\bar{\alpha}_jb|^2-(1-b)^2|\alpha_j|^2)^2}}.
		\end{equation*}
		Let us calculate the limit of $o_j$,
		\begin{equation*}
		\lim_{j\rightarrow\infty}\dfrac{(\alpha_j-b)(1-\alpha_jb)-(1-b)^2\alpha_j}{|1-\bar{\alpha}_jb|^2-(1-b)^2|\alpha_j|^2}=\lim_{x\rightarrow 1}\dfrac{(x-b)(1-xb)-(1-b)^2x}{|1-xb|^2-(1-b)^2|x|^2}=0,
		\end{equation*}
		by L'H\^{o}pital's rule. For the same reason, $r_k\rightarrow 1$ as $j\rightarrow\infty$.
	\end{proof}
	
	The imitation to balls are also available.

	\begin{lemma}[Two balls lemma]
		Suppose there are a faimily of automorphisms \begin{equation*}
		\psi_j(z,w)=(\dfrac{z+a_j}{1+\bar{a}_jz},\dfrac{\sqrt{1-|a_j|^2}}{1+\bar{a}_jz}w)
		\end{equation*} 
		on the unit ball $\mathbb{B}^m$ where $\alpha_j\in\mathbb{R}$ and $\alpha_j\rightarrow 1$. We also define a small ball $\mathcal{B}_s$ in the same dimension center at $(b,0)\in\mathbb{R}$ with radius $1-b$ where $b>0$ close to $1$. Then $\psi^{-1}_j(\mathbb{B}_s)\rightarrow\mathbb{B}^m$ in sense of convergence in increasing subsets.
	\end{lemma}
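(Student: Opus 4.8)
The plan is to imitate the proof of the Two discs lemma, carrying out the computation ``one complex variable at a time'' since the only genuinely new ingredient is the vector part $w$. First I would record that each $\psi_j$ is an automorphism of $\mathbb{B}^m=\{|z|^2+\|w\|^2<1\}$: using that $a_j$ is real, if $|z|^2+\|w\|^2=1$ then
\begin{equation*}
|z+a_j|^2+(1-a_j^2)\|w\|^2=|z|^2+2a_j\operatorname{Re} z+a_j^2+(1-a_j^2)\|w\|^2=|1+a_jz|^2 ,
\end{equation*}
so $\psi_j$ maps $\partial\mathbb{B}^m$ into itself; hence $\psi_j^{-1}$ is again an automorphism and $\psi_j^{-1}(\mathcal{B}_s)\subset\mathbb{B}^m$ for every $j$, which already supplies the ``increasing inside $\mathbb{B}^m$'' half of the conclusion. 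It is also convenient to note $\psi_j^{-1}=\psi_{-a_j}$ and that $\psi_j\to(1,0)$ locally uniformly on $\mathbb{B}^m$ as $a_j\to1$.

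Next I would compute $\psi_j^{-1}(\mathcal{B}_s)=\{(z,w)\in\mathbb{B}^m:\psi_j(z,w)\in\mathcal{B}_s\}$ explicitly. Writing $\psi_j(z,w)=(Z,W)$, substituting into $|Z-b|^2+\|W\|^2<(1-b)^2$ and clearing the common denominator $|1+a_jz|^2$, the condition becomes
\begin{equation*}
\bigl|(1-a_jb)z+(a_j-b)\bigr|^2+(1-a_j^2)\|w\|^2<(1-b)^2\,|1+a_jz|^2 .
\end{equation*}
Expanding (all coefficients are real) this is a quadratic inequality $A_j|z|^2+B_j\operatorname{Re} z+C_j\|w\|^2<D_j$, and a short calculation shows each of $A_j,B_j,D_j$ carries a factor $1-a_j$ while $C_j=(1-a_j)(1+a_j)$. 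Dividing through by $1-a_j>0$ therefore leaves polynomials in $a_j$ and $b$ with obvious limits as $a_j\to1$ — this plays the role of the L'H\^opital step in the discs lemma — and passing to the limit identifies the limiting region as $\{\,|z|^2+c\,\|w\|^2<1\,\}$ for a fixed constant $c>0$ depending only on $b$, i.e. a linear image of $\mathbb{B}^m$ (which, after the fixed rescaling of the $w$-coordinates, is $\mathbb{B}^m$ itself).

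Finally I would upgrade this to convergence in increasing subsets. Since the normalized coefficients converge and the limiting strict inequality persists on a neighbourhood of any compact subset $K$ of the limit region, we get $K\subset\psi_j^{-1}(\mathcal{B}_s)$ for all large $j$; together with $\psi_j^{-1}(\mathcal{B}_s)\subset\mathbb{B}^m$ this is the desired convergence, and monotonicity $\psi_j^{-1}(\mathcal{B}_s)\subset\psi_{j+1}^{-1}(\mathcal{B}_s)$ is arranged by passing to a subsequence, using that the limiting region is fixed and the coefficient vectors $(A_j,B_j,C_j,D_j)$ converge. I expect the main obstacle to be precisely this nesting step: unlike in one variable the sets $\psi_j^{-1}(\mathcal{B}_s)$ are not round balls — and, clinging to $\partial\mathbb{B}^m$ at the point $(1,0)$, they are not even relatively compact in $\mathbb{B}^m$ — so one cannot compare a center and a radius as in the discs lemma and must instead control the inclusions through the convergence of the coefficient vectors (equivalently, through equicontinuity of the $\psi_j$ on compacta together with $\psi_j\to(1,0)$).
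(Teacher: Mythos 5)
Your coefficient bookkeeping is correct and is essentially the paper's own computation (substitute, clear the denominator $|1+a_jz|^2$, divide by the common factor $1-a_j$, let $a_j\to1$ in place of L'H\^opital), but the limit it produces is not $\mathbb{B}^m$, and the sentence ``after the fixed rescaling of the $w$-coordinates, is $\mathbb{B}^m$ itself'' is exactly where your proof breaks. With your notation one finds $A_j=(1-a_j)(1+a_j-2a_jb)$, $B_j=-2b(1-a_j)^2$, $C_j=(1-a_j)(1+a_j)$, $D_j=(1-a_j)(1+a_j-2b)$, so after dividing by $1-a_j$ and letting $a_j\to1$ the sets $\psi_j^{-1}(\mathcal{B}_s)$ converge to $\lbrace (2-2b)|z|^2+2\|w\|^2<2-2b\rbrace=\lbrace |z|^2+\tfrac{\|w\|^2}{1-b}<1\rbrace$; your constant is $c=\tfrac{1}{1-b}>1$, so the limit is a \emph{proper} subdomain of $\mathbb{B}^m$. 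No rescaling is available to you: the $\psi_j^{-1}(\mathcal{B}_s)$ are fixed subsets of $\mathbb{B}^m$ and the lemma asserts they increase to $\mathbb{B}^m$ itself. Concretely, the compact set $\lbrace(0,w):\|w\|^2\le 1-b\rbrace\subset\mathbb{B}^m$ is never absorbed: $\psi_j(0,w)=(a_j,\sqrt{1-a_j^2}\,w)$ and, when $\|w\|^2=1-b$, one computes $|a_j-b|^2+(1-a_j^2)\|w\|^2-(1-b)^2=b(1-a_j)^2>0$, so $(0,w)\notin\psi_j^{-1}(\mathcal{B}_s)$ for every $j$. An invariant cross-check: $\mathcal{B}_s=\lbrace 2b\,\Re(1-Z)<1-|Z|^2-\|W\|^2\rbrace$, and using $1-\|\psi_j(z,w)\|^2=(1-a_j^2)(1-|z|^2-\|w\|^2)/|1+a_jz|^2$ together with $1-\tfrac{z+a_j}{1+a_jz}=\tfrac{(1-a_j)(1-z)}{1+a_jz}$, membership in $\psi_j^{-1}(\mathcal{B}_s)$ is equivalent to $2b\,(1-(1-a_j)\Re z-a_j|z|^2)<(1+a_j)(1-|z|^2-\|w\|^2)$, whose limit is again $\|w\|^2<(1-b)(1-|z|^2)$.

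So the issue is not a repairable gap in your nesting step: your (correct) calculation actually contradicts the statement, and it also pinpoints the flaw in the paper's own proof, which asserts that Equation (\ref{2.75}) tends to $|z|^2+|w|^2<1$; in fact the coefficient of $|w|^2$ there is $\tfrac{1-|a_j|^2}{|1-\bar{a}_jb|^2-(1-b)^2|a_j|^2}=\tfrac{1+a_j}{1+a_j-2a_jb}\to\tfrac{1}{1-b}\neq1$. The geometric reason is that for $m\ge2$ a Euclidean ball internally tangent to $\partial\mathbb{B}^m$ at $(1,0)$ is not a horoball: the regions whose $\psi_j$-preimages do exhaust $\mathbb{B}^m$ are $\lbrace|1-Z|^2<c\,(1-|Z|^2-\|W\|^2)\rbrace$, whose tangential size is comparable to the square root of their normal size, unlike $\mathcal{B}_s$. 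Consequently the lemma as written cannot be proved; its conclusion should be weakened to convergence to $\lbrace|z|^2+\|w\|^2/(1-b)<1\rbrace$ (or $\mathcal{B}_s$ replaced by a horoball-type region), and you should flag that the application in the proof of Theorem \ref{thm1}, which needs exhaustion of the genuine unit ball, is affected, rather than smoothing the discrepancy over with an unlicensed change of coordinates.
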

	
	\begin{proof}
		Since $\mathbb{B}_s=\lbrace(z',w'): |z'|^2+|w'|^2<1\rbrace$, we have
		\begin{equation*}
		|\dfrac{z+a_j}{1+\bar{a}_jz}-b|^2+|\dfrac{\sqrt{1-|a_j|^2}}{1+\bar{a}_jz}w|^2<(1-b)^2.
		\end{equation*}
		By calculation, we have 
		\begin{equation}\label{2.75}
		\begin{split}
		&|z+\dfrac{(a_j-b)(1-a_jb)-(1-b)^2a_j}{|1-\bar{a}_jb|^2-(1-b)^2|a_j|^2}|^2+|\dfrac{\sqrt{1-|a_j|^2}}{\sqrt{|1-\bar{a}_jb|^2-(1-b)^2|a_j|^2}}w|^2<\dfrac{(1-b)^2-|a_j-b|^2}{|1-\bar{a}_jb|^2-(1-b)^2|a_j|^2}\\&+\dfrac{|(a_j-b)(1-a_jb)-(1-b)^2a_j|^2}{(|1-\bar{a}_jb|^2-(1-b)^2|a_j|^2)^2}.
		\end{split}
		\end{equation}
		Again, by L'H\^{o}pital's rule, one can see the formula in the Equation (\ref{2.75}) approaches to 
		\begin{equation*}
		|z|^2+|w|^2<1
		\end{equation*}
	\end{proof}

	\begin{theorem}\label{thm1}
		Let $M$ be a 2-dimensional complex manifold with holomorphic sectional curvature bounded from above by a negative number $-K$ and assume $M$ is a monotone union of ellipsoids $E_n:=\lbrace (z,w): |z|^2+|w|^{2n}<1\rbrace$ for some $n\in\mathbb{Z}^+$ via $f_j$. Then $M$ is biholomorphic onto $E_n$ or the unit ball $\mathbb{B}^2$.
	\end{theorem}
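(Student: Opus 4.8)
My plan is to fix an arbitrary $q\in M$, pass to a subsequence so that $f_j(q)\to p\in\overline{E_n}$, and split according to the location and nature of $p$. For $n=1$ we have $E_1=\mathbb{B}^2$ and the statement reduces to the ball case treated below, so assume $n\ge 2$; then $\partial E_n$ consists of the strongly pseudoconvex points $\lbrace(z,w):|z|^2+|w|^{2n}=1,\ w\ne 0\rbrace$ together with the circle $\Sigma=\lbrace(z,0):|z|=1\rbrace$ of weakly pseudoconvex points of finite type $2n$. The three cases are: (i) $p\in E_n$; (ii) $p\in\partial E_n\setminus\Sigma$; (iii) $p\in\Sigma$. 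In case (i) I would equip $E_n$ with its Cheng--Yau Kähler--Einstein metric, which is complete, Kähler, and has constant Ricci curvature $-(m+1)=-3$, so Lemma \ref{0} applies verbatim and gives $M\cong E_n$. (By Lemma \ref{0} and Corollary \ref{0.5}, case (i) is exactly the situation in which $f_j(z)$ has an interior limit for \emph{every} $z\in M$, and then $\lim f_j$ is the desired biholomorphism.)

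Case (ii) is the first substantive case. Since $E_n$ is convex and $p$ is strongly pseudoconvex, $E_n$ is squeezed near $p$ between an inscribed and a circumscribed Euclidean ball tangent to $\partial E_n$ at $p$ to second order, and I would run a Pinchuk-type rescaling: choose affine dilations $\Lambda_j$ of $\mathbb{C}^2$ centred at points moving to $p$ along the inner normal at distance comparable to $\delta(f_j(q))$, normalized so that $\Lambda_j(f_j(q))$ converges to a fixed interior point of the limit domain and $|\det\Lambda_j|\asymp\delta(f_j(q))^{-3/2}$; the Two Balls Lemma, applied to the inscribed and circumscribed balls, identifies $\lim_j\Lambda_j(E_n)$ with the unbounded realization $\Omega_\infty$ of $\mathbb{B}^2$. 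Writing $g_j=\Lambda_j\circ f_j\colon M_j\to\Lambda_j(E_n)=:\Omega_j$, each $\Omega_j$ is biholomorphic to $E_n$, hence carries the Kähler--Einstein metric with Ricci curvature $-3$, so the hypotheses of Corollary \ref{0.5} (with $j$-dependent targets sharing a uniform Ricci lower bound) hold. Proposition \ref{2.5} then gives
\begin{equation*}
|Jg_j(q)|=|\det\Lambda_j|\,|Jf_j(q)|\gtrsim\delta(f_j(q))^{-3/2}\cdot\eta\,\delta(f_j(q))^{3/2}=\eta'>0,
\end{equation*}
so $\det Jg_j(q)\not\to 0$. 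Corollary \ref{0.5} makes $G=\lim g_j$ injective and $M$ taut, and the final part of the proof of Lemma \ref{0} (normality of $\lbrace g_j\rbrace$ and $\lbrace g_j^{-1}\rbrace$, Hurwitz, the open mapping theorem) upgrades this to a biholomorphism $M\cong\Omega_\infty\cong\mathbb{B}^2$.

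Case (iii): after a rotation in $z$ take $p=(1,0)$; in coordinates $(\zeta_1,\zeta_2)=(1-z,w)$ near $p$, $E_n$ is $\lbrace\operatorname{Re}\zeta_1>\tfrac12(|\zeta_1|^2+|\zeta_2|^{2n})\rbrace$ near $p$, with homogeneous model $\lbrace\operatorname{Re}\zeta_1>|\zeta_2|^{2n}\rbrace$, which a Cayley-type transform identifies biholomorphically with $E_n$ itself. I would repeat the scheme of case (ii) using the \emph{anisotropic} dilations $\Lambda_j(\zeta_1,\zeta_2)=(\lambda_j\zeta_1,\lambda_j^{1/2n}\zeta_2)$ (preceded by a translation carrying $f_j(q)$ to a fixed base point), chosen so that $\Lambda_j(E_n)$ exhausts $\lbrace\operatorname{Re}\zeta_1>|\zeta_2|^{2n}\rbrace$ — the exhaustion being the ``two ellipsoids'' analogue of the Two Discs Lemma, obtained by the same computation applied to the automorphisms $\psi_a(z,w)=\bigl(\tfrac{z-a}{1-\bar a z},\tfrac{(1-|a|^2)^{1/2n}}{(1-\bar a z)^{1/n}}w\bigr)$ of $E_n$. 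The Jacobian control now comes from the finite-type analogue of Proposition \ref{2.5}, proved the same way but with the boundary asymptotics of the Bergman kernel at a type-$2n$ point of $\mathbb{C}^2$ in place of Fefferman's expansion: the normalization of $\Lambda_j$ is dictated by the Bergman metric of $E_n$ at $f_j(q)$, and with that normalization $|\det\Lambda_j|\,|Jf_j(q)|\gtrsim\eta''>0$. Corollary \ref{0.5} and the tautness argument then yield a biholomorphism $M\cong\lbrace\operatorname{Re}\zeta_1>|\zeta_2|^{2n}\rbrace\cong E_n$.

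Combining the cases, $M$ is biholomorphic to $E_n$ in cases (i) and (iii) and to $\mathbb{B}^2$ in case (ii), which is the theorem. I expect the main obstacle to be the rescaling argument of cases (ii) and (iii), and within it the finite-type case (iii): one must both pin down the correct normalization of the dilations and establish the finite-type version of Proposition \ref{2.5} (the delicacy being that a priori the decay of $|Jf_j(q)|$ depends on the direction along which $f_j(q)$ approaches $p\in\Sigma$, not merely on $\delta(f_j(q))$), and then verify that the rescaled maps form a normal family whose limit is a genuine biholomorphism onto the (now $j$-varying and unbounded) model domain rather than a degenerate or non-surjective map — which is precisely the scenario the flexible form of Corollary \ref{0.5}, allowing $j$-dependent targets with a uniform Ricci lower bound, was designed to cover.
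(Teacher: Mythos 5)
Your cases (i) and (ii) are in the spirit of the paper, but case (iii) contains a genuine gap, and in fact its stated conclusion is false. You claim that whenever $f_j(q)$ accumulates at a weakly pseudoconvex point of $\Sigma$, an anisotropic rescaling yields $M\cong E_n$. But the dichotomy in the theorem is produced precisely inside this case: the paper's own remark exhibits $\mathbb{B}^2=\bigcup_j\phi_j^{-1}(E_n)$ as a monotone union of ellipsoids, and if one post-composes the maps $\phi_j$ with the automorphisms $\chi_j(z,w)=\bigl(\tfrac{z+c_j}{1+c_jz},\tfrac{(1-c_j^2)^{1/2n}}{(1+c_jz)^{1/n}}w\bigr)$ of $E_n$ with $c_j\to 1$, the new biholomorphisms $g_j=\chi_j\circ\phi_j$ still realize $M=\mathbb{B}^2$ as a monotone union of $E_n$, yet $g_j(q)\to(1,0)\in\Sigma$. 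So approach to a weakly pseudoconvex point is compatible with $M\cong\mathbb{B}^2$, and no finite-type analogue of Proposition \ref{2.5} with a normalization depending only on $\delta(f_j(q))$ can force a biholomorphism onto the model $\lbrace\operatorname{Re}\zeta_1>|\zeta_2|^{2n}\rbrace$; the direction-of-approach dependence you flag is not a technicality but the heart of the matter. The paper resolves this case without any new Bergman asymptotics: it composes $f_j$ with the automorphisms $\psi_j$ of $E_n$ carrying $f_j(q)=(a_j,b_j)$ to $(0,b'_j)$ with $b'_j=|b_j|(1-|a_j|^2)^{-1/2n}$, and then splits on $\lim b'_j$: if the limit is $<1$ the renormalized orbit has an interior limit and Lemma \ref{0} gives $M\cong E_n$, while if it equals $1$ the problem reduces to the strongly pseudoconvex case and yields $M\cong\mathbb{B}^2$. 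Your case (iii) as written proves neither alternative and asserts the wrong one.

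A smaller issue concerns case (ii): your affine Pinchuk dilations $\Lambda_j$ send $E_n$ to domains exhausting an unbounded Siegel model, so the rescaled targets are not contained in a fixed bounded $\Omega$ and Corollary \ref{0.5} (whose proof rests on Montel for maps into a bounded domain) does not apply as stated; you would need either separate normality estimates or a Cayley transform back into the ball. The paper avoids this by sandwiching $E_n$ between two balls tangent at the (normalized) strongly pseudoconvex point $(0,1)$ and composing with the ball automorphisms $\phi_j^{-1}$, so that all images stay inside $\mathbb{B}^2$, the two balls lemma identifies the exhaustion, and Proposition \ref{2.5} supplies exactly the Jacobian lower bound $|J\tilde f_j(q)|\gtrsim\eta(1-|b'_j|)^{3/2}$ needed to cancel the decay of $\det J\phi_j^{-1}$. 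Also note that the paper reaches the strongly pseudoconvex case only after the $\psi_j$-normalization puts the orbit on the $w$-axis, and that Proposition \ref{2.5} is stated for the Bergman metric (via Fefferman's expansion), not the K\"ahler--Einstein metric you invoke, though the latter substitution is harmless for the Ricci lower bound.
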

	\begin{remark}\label{more}
		Indeed, when $f_j(q)$ approaches to a strongly pseudoconvex point, for some $q\in M$, one can have a more general result, by using the argument of \citep{Kl78} provided a complete Bergman metric on $M$. However, we will not discuss it because it will be not an application of the Schwarz lemma.
	\end{remark}
	\begin{proof}
		If there exists a point $q\in M$ so that  $f_j(q)\rightarrow p\in E_n$ where $p$ is an interior point of $E_n$ then by the Lemma \ref{0}, $M$ is biholomorphic to $E_n$. Now we analyze the cases that $f_j(q)\rightarrow p\in\partial E_n$.
		
		It is well known on $\partial E_n$, there are only two types of boundary points: weakly pseudoconvex points $(e^{i\theta},0)$ where $\theta\in[0,2\pi)$ and strongly pseudoconvex on all other boundary points. 
		
		Let $f_j(q)\rightarrow p$ where $p$ is weakly pseudoconvex. Without loss of generality, we assume $p=(1,0)$. For each $f_j$ and $f_j(q)=(a_j, b_j)$, we composite it with an automorphism $\psi_j$ of $E_n$ so that $(a_j, b_j)$ maps to $(0, b'_j)$ for some $b'_j\in\mathbb{R}$. Indeed, this is possible by letting 
		\begin{equation*}
		\psi_j(z,w)=(\dfrac{z-a_j}{1-\bar{a}_jz}, e^{-i\theta_j}\dfrac{\sqrt[{2n}]{1-|a_j|^2}}{\sqrt[n]{1-\bar{a}_jz}}w),
		\end{equation*}
		where $\theta_j$ is the argument of $b_j$ and thus $b'_j=\dfrac{\sqrt[{2n}]{1-|a_j|^2}}{\sqrt[n]{1-\bar{a}_ja_j}}|b_j|$. Since $b_j'$ is bounded, it must have limit $b'_0$. If $b'_0\neq 1$, $\psi_j\circ f_j(q)=\psi_j(a_j, b_j)=(0,b'_j)\rightarrow (0, b'_0)$, where $(0, b'_0)$ is an interior point of $E_n$. By the lemma \ref{0}, we finish constructing the biholomorphism by passing $\psi_j\circ f_j$ to the limit and in this case $M$ is biholomorphic to $E_n$. For the case of $b'_0=1$, we get $(0, b'_j)\rightarrow (0,1)$ which means it approaches to a strongly pseudoconvex point. We discuss it in the next paragraph.
		
		Before we proceed to the case that $p$ is strongly pseudoconvex, we simplify it a little bit. If $f_j(q)=(a_j,b_j)\rightarrow p=(a_0, b_0)$, where $b_0\neq 0$, then we composite each $f_j$ with $\psi_j$ which maps $(a_j, b_j)$ to $(0, b'_j)$ for some $b_j\in\mathbb{R}$ as the last paragraph. But $\psi_j\circ f_j(q)\rightarrow (0, b'_0)$ where $(0, b'_0)$ is not necessarily to be an interior point.  If it is interior point of $\Omega$, by the Lemma \ref{0}, we get again $M$ is biholomorphic to $E_n$, otherwise, $\psi_j\circ f_j(q)$ approaches to a boundary point $(0, b'_0)$ which is strongly pseudoconvex. Without loss of generality, we assume $b'_0=1$. From now on, we write $\tilde{f}_j$ instead of $\psi_j\circ f_j$.
		
		The ellipsoid, by translation, has a defining function
		\begin{equation*}
		|z|^2+|w-1|^{2n}<1\Leftrightarrow |w|^2+1-2\Re w< \sqrt[n]{1-|z|^2}\Leftrightarrow |w|^2+\dfrac{1}{n}|z|^2+o(|z|^2)<2\Re w. 
		\end{equation*}
		On the other side the ball $\mathcal{B}_l$ center at $(0,1)$ with radius $1$ has a defining function $|z|^2+|w|^2<2\Re w$ and the ball $\mathcal{B}_s$ center at $(0, n)$ with radius $n$ has a defining function $\dfrac{1}{n}|z|^2+\dfrac{1}{n}|w|^2<2\Re w$. So $\mathcal{B}_s\subset E_n\subset\mathcal{B}_l$ and they are tangent to each other at $(0,0)$. Without loss of generality, we assume $\mathcal{B}_l=\mathbb{B}^2$ by translation and zooming. We also know $f_j(q)=(0, b'_j)$ where $b'_j\rightarrow 1$ as $j\rightarrow\infty$. By the Lemma \ref{2.5}, we see $|J\tilde{f}_j(q)|\gtrsim\eta\delta(\tilde{f}_j(q))^{\frac{m+1}{2}}$, which implies in our case that $|J\tilde{f}_j(q)|\gtrsim\eta(1-\|\tilde{f}_j(q)\|)^{\frac{m+1}{2}}=\eta(1-|b'_j|)^{\frac{m+1}{2}}$. We define 
		\begin{equation*}
		\phi_j=(\dfrac{\sqrt{1-|b'_j|^2}}{1+|b'_j|w}z, \dfrac{w+|b'_j|}{1+|b'_j|w})
		\end{equation*}
		and 
		\begin{equation*}
		\phi^{-1}_j=(\dfrac{\sqrt{1-|b'_j|^2}}{1-|b'_j|w}z,\dfrac{w-|b'_j|}{1-|b'_j|w},).
		\end{equation*}
		Hence 
		\begin{equation*}
		\det J\phi^{-1}_j\circ \tilde{f}_j(q)=\det J\phi^{-1}(\tilde{f}_j(q))\det J\tilde{f}_j(q)\gtrsim\eta(1-|b'_j|^2)^{-3}(1-|b'_j|^2)^3.
		\end{equation*} 
		But the last term never vanishes. Thus the limit $F$ of $\phi^{-1}_j\circ \tilde{f}_j$ has nontrivial image. Moreover, the image of $F$ is $\mathbb{B}^2$ because by the two balls lemma $\phi^{-1}_j(\mathcal{B}_s)\subset\phi^{-1}(E_n)=\phi^{-1}_j(\tilde{f}_j(M_j))$ and $\phi^{-1}_j(\mathcal{B}_s)$ is growing to $\mathbb{B}^2$. 
		
		At the last, we check the injectivity of $F$. Firstly, the Bergman metric on $E_n$ is of invariance under $\phi^{-1}_j$. Thus the discussion of the Bergman metric on $\phi^{-1}_j(E_n)$ makes sense. Since $\det JF$ is nowhere vanishing, for any $z_0, w_0\in M$ so that $\lim\phi_j^{-1}\circ \tilde{f}_j(z_0)=\lim \phi_j^{-1}\circ \tilde{f}_j(w_0)$ we have a $N>0$ so that  for all big $j$, $\phi_j^{-1}\circ \tilde{f}_j(z_0), \phi_j^{-1}\circ \tilde{f}_j(w_0)\in\phi^{-1}_N(E_n)$ and
		\begin{equation*}
		\mathrm{d}_M(z_0, w_0)\leq C\mathrm{d}_{\phi^{-1}_N(E_n)}(\phi^{-1}_j(f_j(z_0)) \phi^{-1}_j(f_j(w_0))).
		\end{equation*}
		Hence $F$ is injective which completes the proof.
	\end{proof}
	
	\begin{remark}
		The proof above also gives a example of a unit ball which is a monotone union of ellipsoids by letting $M_j=\phi^{-1}_j(E_m)$.
	\end{remark}

	Without much effort, one can show the following corollary.

	\begin{corollary}
		Let $M$ be a m-dimensional complex manifold with holomorphic sectional curvature bounded from above by a negative number $-K$ and assume $M$ is a monotone union of balls with the same dimension, then $M$ is biholomorphic onto $\mathbb{B}^m$.
	\end{corollary}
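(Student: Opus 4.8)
The plan is to re-run the argument of Theorem~\ref{thm1}; for balls this is in fact the easier situation, since $\partial\mathbb{B}^m$ has no weakly pseudoconvex points and every automorphism of $\mathbb{B}^m$ maps $\mathbb{B}^m$ onto itself, so no ``zooming'' step is needed and one never leaves the ball. First I would fix any $q\in M$. As $\mathbb{B}^m$ is bounded, after passing to a subsequence $f_j(q)$ converges to some $p\in\overline{\mathbb{B}^m}$. If $p$ is an interior point of $\mathbb{B}^m$, then, since $\mathbb{B}^m$ carries a complete K\"ahler metric --- its Bergman metric, which is K\"ahler--Einstein with Ricci curvature a negative constant multiple of the metric, in particular bounded from below --- Lemma~\ref{0} applies directly and gives that $M$ is biholomorphic onto $\mathbb{B}^m$. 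So from now on I would assume $p\in\partial\mathbb{B}^m$.

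Every boundary point of $\mathbb{B}^m$ is strongly pseudoconvex, so Proposition~\ref{2.5} applies at $p$: there is $\eta>0$ with $|\det Jf_j(q)|\gtrsim\eta\,\delta(f_j(q))^{\frac{m+1}{2}}$ for large $j$, where $\delta(z)=1-\|z\|$. Since $1-\|z\|\le 1-\|z\|^2\le 2(1-\|z\|)$ on $\mathbb{B}^m$, this yields $|\det Jf_j(q)|\gtrsim\eta'\,(1-\|f_j(q)\|^2)^{\frac{m+1}{2}}$ for some $\eta'>0$ and all large $j$. For each $j$ I would take the (essentially unique, involutive) automorphism $\phi_j\in\mathrm{Aut}(\mathbb{B}^m)$ with $\phi_j(f_j(q))=0$; a direct computation of its holomorphic Jacobian --- in one variable it is the M\"obius map $\tfrac{a-z}{1-\bar a z}$, in general the standard automorphism exchanging $a$ and $0$ --- gives $|\det J\phi_j(f_j(q))|=(1-\|f_j(q)\|^2)^{-\frac{m+1}{2}}$. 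Then $g_j:=\phi_j\circ f_j\colon M_j\to\mathbb{B}^m$ is again a biholomorphism onto $\mathbb{B}^m$ with $g_j(q)=0$ for every $j$, and
\begin{equation*}
|\det Jg_j(q)|=|\det J\phi_j(f_j(q))|\cdot|\det Jf_j(q)|\gtrsim\eta'\,(1-\|f_j(q)\|^2)^{-\frac{m+1}{2}}(1-\|f_j(q)\|^2)^{\frac{m+1}{2}}=\eta',
\end{equation*}
so $\det Jg_j(q)\not\to0$.

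Hence $M$ is a monotone union of $\mathbb{B}^m$ via the biholomorphisms $g_j$, and $g_j(q)=0$ converges to the interior point $0\in\mathbb{B}^m$; applying Lemma~\ref{0} to $\{g_j\}$ gives that $M$ is biholomorphic onto $\mathbb{B}^m$, proving the corollary. (One could equally finish by invoking Corollary~\ref{0.5} with $\det Jg_j(q)\not\to0$ to get that the limit $F$ of $\{g_j\}$ is injective and $M$ taut, and then repeating the last part of the proof of Lemma~\ref{0} to see that $F$ is onto.) I do not expect any genuine obstacle: the entire content sits in Lemma~\ref{0} and Proposition~\ref{2.5}, and the only things to verify are the soft inputs --- that $\mathbb{B}^m$ admits a complete Bergman metric with Ricci bounded below, that $\partial\mathbb{B}^m$ is everywhere strongly pseudoconvex, and that the power of $(1-\|f_j(q)\|^2)$ appearing in $\det J\phi_j$ is exactly $-\tfrac{m+1}{2}$, so that it cancels the lower bound from Proposition~\ref{2.5}.
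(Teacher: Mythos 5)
Your proposal is correct and uses exactly the machinery the paper intends for this corollary (which it leaves as ``without much effort''): Lemma~\ref{0} when $f_j(q)$ stays interior, and Proposition~\ref{2.5} at the (necessarily strongly pseudoconvex) boundary limit, combined with recentering by ball automorphisms $\phi_j$ whose Jacobian $(1-\|f_j(q)\|^2)^{-\frac{m+1}{2}}$ cancels the decay. The only difference from the proof of Theorem~\ref{thm1} is that, since $\mathrm{Aut}(\mathbb{B}^m)$ preserves the ball and acts transitively, you can feed $g_j=\phi_j\circ f_j$ back into Lemma~\ref{0} directly and skip the two balls lemma and the separate injectivity check, which is a legitimate simplification of the same argument.
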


	\section{An application to $\rtimes$ domains}\label{sec3}
	In \citep{Liu004}, the author defined a generalized bidisc $\mathbb{D}\rtimes e^{i\theta(z)}\mathbb{H}^+:=\lbrace (z,w): z\in\mathbb{D}, w\in e^{i\theta(z)}\mathbb{H}^+\rbrace$, where $\mathbb{D}$ is the unit disc, $\mathbb{H}^+$ is the upper half plane, $\theta$ is a continuous real function depending on $z$ and $e^{i\theta(z)}\mathbb{H}^+$ is the upper half plane rotated by the angle $\theta(z)$. It has a noncompact automorphism group and share some properties with the bidisc. Indeed, when $\theta(z)$ is a zero function, $\mathbb{D}\rtimes e^{i\theta(z)}\mathbb{H}^+$ is biholomorphic to a bidisc.
	
	In this section, we will use a well-known argument of \citep{Ya76} to exhibit there does not exist a complete K\"{a}hler metric with holomorphic bisectional curvature between two negative numbers. Indeed, we can extend it a little more with the Schwarz lemma of \citep{Ro80} as follows.
	
	\begin{theorem}
		Let $\theta(z)\in[0,k)$, where $k<\pi$. Then there does not exist two numbers $d>c>0$ and a complete K\"{a}hler metric on $\mathbb{D}\rtimes e^{i\theta(z)}\mathbb{H}^+$ such that the holomorphic sectional curvature $<-c$ and the Ricci curvature $>-d$. 
	\end{theorem}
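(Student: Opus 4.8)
Assume, for contradiction, that there exist $d>c>0$ and a complete K\"ahler metric $g$ on $\Omega:=\mathbb{D}\rtimes e^{i\theta(z)}\mathbb{H}^+$ whose holomorphic sectional curvature is $<-c$ and whose Ricci curvature is $>-d$. The first move, and the place where the hypothesis $k<\pi$ enters, is to realize $\Omega$ as a subdomain of a bidisc. The second coordinate $(z,w)\mapsto w$ is a holomorphic function on $\Omega$ whose image lies in the sector $S:=\bigcup_{0\le t<k}e^{it}\mathbb{H}^+=\{re^{i\psi}:r>0,\ 0<\psi<\pi+k\}$; since its aperture $\pi+k$ is strictly less than $2\pi$, $S$ is a simply connected proper subdomain of $\mathbb{C}$, so there is a Riemann map $\Phi\colon S\to\mathbb{D}$. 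Then $(z,w)\mapsto(z,\Phi(w))$ maps $\Omega$ biholomorphically onto a domain $\widetilde\Omega$ with $\mathbb{D}\times U\subseteq\widetilde\Omega\subseteq\mathbb{D}^2$, where $U:=\Phi\big(\mathrm{int}\bigcap_{z\in\mathbb{D}}e^{i\theta(z)}\mathbb{H}^+\big)$ is a nonempty open subset of $\mathbb{D}$ (nonempty because $\bigcap_z e^{i\theta(z)}\mathbb{H}^+\supseteq\{re^{i\psi}:k\le\psi\le\pi\}$). Transporting $g$ by this map, I would assume from now on that $g$ lives on $\widetilde\Omega$ with the same completeness and curvature properties; if $k=0$ the domain is already a bidisc and one invokes \citep{Ya76} directly.

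Write $\rho$ for the Poincar\'e metric of $\mathbb{D}$ (Gaussian curvature $-1$) and $\widehat g$ for the restriction to $\widetilde\Omega$ of the product Poincar\'e metric $p_1^*\rho\oplus p_2^*\rho$ on $\mathbb{D}^2$. Applying Royden's Schwarz lemma to the coordinate projections $p_1,p_2\colon(\widetilde\Omega,g)\to(\mathbb{D},\rho)$ --- permitted since $(\widetilde\Omega,g)$ is complete K\"ahler with Ricci $>-d$ and $\rho$ has holomorphic sectional curvature $-1$ --- gives $p_j^*\rho\le d\,g$, hence the global lower bound $\widehat g\le 2d\,g$. Applied in the other direction to holomorphic discs $\phi\colon(\mathbb{D},\rho)\to(\widetilde\Omega,g)$, whose target has holomorphic sectional curvature $<-c$, it gives $\phi^*g\le\tfrac1c\rho$: feeding in the full horizontal slices $\mathbb{D}\times\{\zeta\}$ for $\zeta\in U$ (which bounds $g_{z\bar z}$) and the complex fibres $\{z\}\times\widetilde\Omega_z$, biholomorphic to $e^{i\theta(z)}\mathbb{H}^+$ (which bounds $g_{\zeta\bar\zeta}$ by $\tfrac1c$ times the fibre's Poincar\'e metric), together with a Cauchy--Schwarz estimate on the off-diagonal term, yields a two-sided comparison $c_\varepsilon\widehat g\le g\le C_\varepsilon\widehat g$ on each slab $\mathcal{G}_\varepsilon:=\mathbb{D}\times\Phi(\{k+\varepsilon<\arg w<\pi-\varepsilon\})$; the point is that on $\mathcal{G}_\varepsilon$ one has $\arg w-\theta(z)\in(\varepsilon,\pi-\varepsilon)$, so the Poincar\'e metric of the fibre $e^{i\theta(z)}\mathbb{H}^+$ is uniformly comparable to that of $S$, i.e.\ to $p_2^*\rho$. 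The slabs $\bigcup_{\varepsilon>0}\mathcal{G}_\varepsilon$ accumulate on the ``edge'' $\widetilde E:=\mathbb{D}\times\{\Phi(0)\}$ --- the image of $\mathbb{D}\times\{0\}\subset\partial\Omega$ --- and on the boundary part $\{|z|=1\}$, and these are precisely the parts of $\partial\widetilde\Omega$ toward which $\widehat g$, hence $g$, is complete.

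It then remains to run Yang's argument on the complete K\"ahler manifold $(\widetilde\Omega,g)$, which has Ricci curvature bounded below, exactly as in \citep{Ya76} (see also \citep{SZ08}, \citep{Se12}): one applies the almost maximum principle of \citep{Ya78} to a function built from $\det g$ and $\det\widehat g$ --- for instance $\psi:=\log(\det\widehat g/\det g)$, which by $\widehat g\le 2d\,g$ is bounded above and, since $\mathrm{Ric}(\widehat g)=-\widehat g$, satisfies $\Delta_g\psi=\mathrm{tr}_g\widehat g+s_g$ with $s_g$ the scalar curvature --- obtaining points $p_k$ with $\psi(p_k)\to\sup\psi$, $|\nabla\psi(p_k)|\to0$ and $\limsup_k\Delta_g\psi(p_k)\le0$. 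On a slab, where $g\asymp\widehat g$, the Bochner/K\"ahler-identity computation at $p_k$, fed with this almost-maximum information, forces the holomorphic sectional curvature of $g$ in a suitable mixed direction $\partial_z+t\,\partial_\zeta$ to tend to a value $\ge0$, contradicting the hypothesis that it is $<-c$.

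The step I expect to be the genuine obstacle is the last one, specifically the interaction of Yang's argument with the fact that $\widetilde\Omega$ is not a product: near $\widetilde E$ it is a ``twisted wedge'' whose axis rotates with $z$ through the angle $\theta(z)$, so the comparison $g\asymp\widehat g$ is available only on the slabs $\mathcal{G}_\varepsilon$, whereas the almost maximum principle a priori produces $p_k$ anywhere in $\widetilde\Omega$. One therefore has to show that $\sup\psi$ --- or the corresponding extremum of a suitably weighted variant of $\psi$, designed to be globally bounded above and extremal only toward $\widetilde E$ --- is approached inside $\bigcup_{\varepsilon>0}\mathcal{G}_\varepsilon$; this is plausible because $\psi\to-\infty$ toward the remaining, ``incomplete'' parts of $\partial\widetilde\Omega$ (there $g$ must blow up in the complex-normal direction while $\widehat g$ stays bounded), but making the localization rigorous is the technical heart of the proof, after which the computation reduces to Yang's.
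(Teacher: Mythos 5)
Your reduction to a subdomain of the bidisc and the comparison of $g$ with the product Poincar\'e metric $\widehat g$ is a genuinely different route from the paper, but as written it has a real gap, and it is exactly the one you flag yourself: you apply Yau's almost maximum principle on the unknown complete manifold $(\widetilde\Omega,g)$, so the sequence $p_k$ can sit anywhere in $\widetilde\Omega$, while your two--sided comparison $c_\varepsilon\widehat g\le g\le C_\varepsilon\widehat g$ (and hence the final curvature computation) is only available on the slabs $\mathcal{G}_\varepsilon$. The localization argument you offer is only heuristic: completeness of $g$ toward the ``incomplete'' parts of $\partial\widetilde\Omega$ forces blow--up of $g$-lengths along approaching paths, but it does not by itself give $\det g\to\infty$ there (the blow--up could be confined to one complex direction while the determinant ratio behaves badly only with further information), and turning ``$\psi\to-\infty$ off the slabs'' into a statement strong enough to force $\psi(p_k)$ to be computed where $g\asymp\widehat g$ would require quantitative boundary estimates for the arbitrary metric $g$ that you do not have. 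In addition, the concluding step (``the Bochner computation at $p_k$ forces a mixed holomorphic sectional curvature $\ge 0$'') is only sketched, and with only Ricci bounded below and holomorphic sectional curvature bounded above the inequality $\Delta_g\psi=\mathrm{tr}_g\widehat g+s_g$ does not obviously self--improve at the almost--maximum points without the very comparison you are trying to localize. So the proposal is an outline with its decisive step missing, not a proof.

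It is worth seeing how the paper avoids this problem entirely: it never runs the maximum principle on the unknown metric. Since $\theta(z)<k<\pi$, the horizontal holomorphic disc $\{(z,e^{i(k+\varepsilon/2)}):z\in\mathbb{D}\}$ lies inside $\mathbb{D}\rtimes e^{i\theta(z)}\mathbb{H}^+$, and one studies the single scalar function $F(z)=h_{2\bar 2}(z,e^{i(k+\varepsilon/2)})$ on $\mathbb{D}$. The Schwarz lemma of \citep{Ro80} applied to the fibre parametrizations $w\mapsto(z,ie^{i\theta(z)}\tfrac{1+w}{1-w})$ bounds $F$ above uniformly in $z$ (this is where $k<\pi$ enters, giving $|1-|w||>\eta$), the Schwarz lemma applied to the projection $\pi(z,w)=z$ bounds $(1-|z|^2)^2h_{1\bar 1}$ from below, and the K\"ahler curvature identity then gives $\Delta_g\log F\ge \tfrac{2c}{d}-|\nabla_g\log F|^2$ everywhere on $\mathbb{D}$ with its Poincar\'e metric $g$. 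Yau's almost maximum principle is then applied on the fixed complete surface $(\mathbb{D},g)$, where no localization issue can arise, and the contradiction is immediate. If you want to salvage your bidisc picture you must either prove the quantitative decay of $\psi$ away from the slabs, or replace the global determinant comparison by this kind of slicing, which is the paper's (and Yang's) key device.
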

	
	Indeed, although Yang's argument has certain requirement on both variables of $\lbrace (z,w): z,w\in\mathbb{D}\rbrace$, it is possible to relax the requirement for the second variable in our proof. Of course similar results for higher dimensions hold for the same reason. But our intention here is only to attract more attention to $\rtimes$ domains and will not exhaust all possibilities. We will also use a modified argument from \citep{Se12}.
	
	\begin{proof}
		We assume the conclusion is not true. Let us denote the Poincar\'{e} metric by $g$ and the complete K\"{a}hler metric on $\mathbb{D}\rtimes e^{i\theta(z)}\mathbb{H}^+$ by $h$. For each $z$, we define $i_z(w)=(z, ie^{i\theta(z)}\dfrac{1+w}{1-w})$ from $\mathbb{D}$ onto $e^{i\theta(z)}\mathbb{H}^+$. We get $i^*h\leq \dfrac{4}{c} g$ because the Ricci curvature of $\mathbb{D}$ is $-4$. Thus,
		\begin{equation}\label{eq1}
		\begin{split}
		&\begin{pmatrix}
		0& \dfrac{2ie^{i\theta(z)}}{(1-w)^2}
		\end{pmatrix}
		\begin{pmatrix}
		h_{11}(z,ie^{i\theta}\dfrac{1+w}{1-w})&h_{12}(z,ie^{i\theta}\dfrac{1+w}{1-w})\\h_{21}(z,ie^{i\theta}\dfrac{1+w}{1-w})&h_{22}(z,ie^{i\theta}\dfrac{1+w}{1-w})
		\end{pmatrix}
		\begin{pmatrix}
		0\\\dfrac{-2ie^{-i\theta(z)}}{(1-\bar{w})^2}
		\end{pmatrix}\\=&h_{22}(z,ie^{i\theta(z)}\dfrac{1+w}{1-w})\dfrac{4}{|1-w|^4}\leq\dfrac{4}{c(1-|w|^2)^2}
		\end{split}
		\end{equation}
		and we have $h_{22}(z,ie^{i\theta(z)}\dfrac{1+w}{1-w})\leq\dfrac{|1-w|^4}{c(1-|w|^2)^2}\leq\dfrac{16}{c(1-|w|^2)^2}$.
		Since $k<\pi$, we find $\epsilon>0$ such that $k+\epsilon<\pi$. And because of $0\leq\theta(z)<k$, for any $z\in\mathbb{D}$, $(z,e^{i(k+\frac{\epsilon}{2})})\in\mathbb{D}\rtimes e^{i\theta(z)}\mathbb{H}^+$. We also have, for all $z\in\mathbb{D}$,
		\begin{equation}\label{eq2}
		\dfrac{\epsilon}{2}<k+\dfrac{\epsilon}{2}-\theta(z)<k+\dfrac{\epsilon}{2}<k+\epsilon<\pi.
		\end{equation}
		We let $w=\dfrac{e^{i(k+\frac{\epsilon}{2})-\theta(z)}-i}{e^{i(k+\frac{\epsilon}{2}-\theta(z))}+i}$ and by the inequality (\ref{eq2}), we can see $|1-|w||>\eta>0$ for some positive number $\eta$ depending on $\epsilon$. Also by the inequality (\ref{eq1}) and $w$, we have
		\begin{equation*}
		h_{22}(z,e^{i\theta(z)}e^{i((k+\frac{\epsilon}{2})-\theta(z))})=h_{22}(z,e^{i(k+\frac{\epsilon}{2})})\leq\dfrac{16}{c\eta^2}.
		\end{equation*}
		Let $F(z):=h_{22}(z,e^{i(k+\frac{\epsilon}{2})})$. We see $F$ is a real bounded positive function on $\mathbb{D}$. Check its Laplacian with respect to Poincar\'{e} metric on $\mathbb{D}$, we have
		\begin{equation*}
		\begin{split}
		&\Delta_gF(z)=(1-|z|^2)^2\dfrac{\partial^2F}{\partial z\bar{\partial}z}(z)=(1-|z|^2)^2(R_{2\bar{2}1\bar{1}}(z,e^{i(k+\frac{\epsilon}{2})})+\sum_{\alpha,\beta=1}^{2}h^{\alpha\bar{\beta}}\dfrac{\partial h_{2\bar{\beta}}}{\partial z}\dfrac{\partial h_{\alpha\bar{2}}}{\partial\bar{z}})\\\geq&c(1-|z|^2)^2h_{2\bar{2}}(z,e^{i(k+\frac{\epsilon}{2})})h_{1\bar{1}}(z,e^{i(k+\frac{\epsilon}{2})})=cF(z)(1-|z|^2)^2h_{1\bar{1}}(z,e^{i(k+\frac{\epsilon}{2})}),
		\end{split}
		\end{equation*}
		because $\sum_{\alpha,\beta=1}^{2}h^{\alpha\bar{\beta}}\dfrac{\partial h_{2\bar{\beta}}}{\partial z}\dfrac{\partial h_{\alpha\bar{2}}}{\partial\bar{z}}$ is nonnegative. Let $\pi:\mathbb{D}\rtimes e^{i\theta(z)}\mathbb{H}^+\rightarrow\mathbb{D}, \pi(z,w)=z$. We also have $\pi^*g\leq\dfrac{d}{4}h$ which is $(1-|z|^2)^2h_{1\bar{1}}(z,w)\leq \dfrac{4}{d}$. Hence $\Delta_gF(z)\geq\dfrac{c}{d}F$. Calculating 
		\begin{equation*}
		\Delta_g\log F(z)=\dfrac{\Delta_gF(z)}{F(z)}-\dfrac{|\nabla_g F(z)|^2}{F(z)^2}\geq\dfrac{2c}{d}-\dfrac{|\nabla_g F(z)|^2}{F(z)^2}.
		\end{equation*}
		By the alomost maximum principle of \citep{Ya78}, a real function $T$ bounded from above on a complete Riemannian manifold $M$ with Ricci curvature bounded below admits a sequence $\lbrace p_k\rbrace_{k=0}^\infty\subset M$ such that
		\begin{equation*}
		\lim\limits_{k\to\infty}|\nabla T(p_k)|=0, \quad\limsup\limits_{k\to\infty}\Delta T(p_k)\leq 0 \quad \text{and} \quad\lim\limits_{k\to\infty}T(p_k)=\sup\limits_{M}T.
		\end{equation*}
		However by observing $\log F(z)$ which is a real function bounded from above on $\mathbb{D}$, it can not have such sequence $\lbrace p_k\rbrace_{k=0}^\infty\subset\mathbb{D}$. This contradiction completes the proof.
	\end{proof}
	
	\begin{remark}
		A natural question is if we can relax the restriction for $\theta(z)$ in the theorem above. 
	\end{remark}
	
	\bigskip
	\bigskip
	
	\noindent {\bf Acknowledgments}. I thank a lot my advisor Prof. Steven Krantz for always being patience to answer my any (even stupid) question. He also encourages me a lot for the career and the life as both of a supervisor and a raconteur. I also thank Prof. Quo-Shin Chi who taught me various geometries and spent much time with me (to help me understand Ricci flow, algebraic geometry and some other interesting topics). Last but not least, I appreciate Prof. Edward Wilson for his kindness and consistent support since I arrived in USA. I always have much profited from the discussion between us. 
	
\printbibliography

\end{document}